\newcommand*{\addFileDependency}[1]{
  \typeout{(#1)}
  \@addtofilelist{#1}
  \IfFileExists{#1}{}{\typeout{No file #1.}}
}
\newcommand{\bx}{\boldsymbol{x}}
\newcommand{\blambda}{\boldsymbol{\lambda}}
\newcommand{\bomega}{\boldsymbol{\omega}}
\newcommand{\btheta}{\boldsymbol{\theta}}
\newcommand{\bu}{\boldsymbol{u}}
\newcommand{\bV}{\boldsymbol{V}}
\newcommand{\bX}{\boldsymbol{X}}
\newcommand{\bU}{\boldsymbol{U}}
\newcommand{\bZ}{\boldsymbol{Z}}
\newcommand{\bpi}{\boldsymbol{\pi}}
\newcommand{\bphi}{\boldsymbol{\phi}}
\newcommand{\bQ}{\boldsymbol{Q}}
\newcommand{\bPhi}{\boldsymbol{\Phi}}
\newcommand{\bK}{\boldsymbol{K}}
\newcommand{\bP}{\boldsymbol{P}}
\newcommand{\bT}{\boldsymbol{T}}
\newcommand{\bF}{\boldsymbol{F}}
\newcommand{\bG}{\boldsymbol{G}}
\newcommand{\bA}{\boldsymbol{A}}
\newcommand{\bL}{\boldsymbol{L}}
\newcommand{\bB}{\boldsymbol{B}}
\newcommand{\bC}{\boldsymbol{C}}
\newcommand{\bH}{\boldsymbol{H}}
\newcommand{\bR}{\boldsymbol{R}}
\newcommand{\bn}{\boldsymbol{n}}
\newcommand{\bm}{\boldsymbol{m}}
\newcommand{\bM}{\boldsymbol{M}}
\newcommand{\bI}{\boldsymbol{I}}
\newcommand{\obx}{\overline{\bx}}
\newcommand{\obPhi}{\overline{\bPhi}}
\newcommand{\obK}{\overline{\bK}}
\newcommand{\okappa}{\overline{\kappa}}
\newcommand{\olambda}{\overline{\lambda}}
\newcommand{\ulambda}{\underline{\lambda}}
\newcommand{\bzero}{\boldsymbol{0}}
\newcommand{\st}{\mathop{\text{\normalfont s.t.}}}
\newcommand{\diag}{\mathop{\text{\normalfont diag}}}
\newcommand{\cG}{\mathcal{G}}
\newcommand{\cV}{\mathcal{V}}
\newcommand{\cE}{\mathcal{E}}
\newcommand{\cK}{\mathcal{K}}
\newcommand{\cN}{\mathcal{N}}
\newcommand{\cI}{\mathcal{I}}
\newcommand{\cJ}{\mathcal{J}}
\newenvironment{bmatrixfn}
{
\begin{footnotesize}
\begin{bmatrix}
}
{ 
\end{bmatrix} 
\end{footnotesize}
}
\begin{document}

\title{Near-Optimal Distributed Linear-Quadratic Regulator for Networked Systems\thanks{
    Submitted to the editors on \today.
    \funding{
      This material is based upon work supported by the U.S. Department of Energy, Office of Science, Office of Advanced Scientific Computing Research (ASCR) under Contract DE-AC02-06CH11347.}
  }
}

\author{
  Sungho Shin\thanks{Mathematics and Computer Science Division, Argonne National Laboratory, Lemont, IL
    (\email{sshin@anl.gov}, \email{anitescu@mcs.anl.gov}).}
  \and
  Yiheng Lin\thanks{California Institute of Technology, Pasadena, CA
    (\email{yihengl@caltech.edu}, \email{adamw@caltech.edu}).
  }
  \and
  Guannan Qu\thanks{Department of Electrical and Computer Engineering, Carnegie Mellon University, Pittsburgh, PA
    (\email{gqu@andrew.cmu.edu})
  }
  \and 
  {\\Adam Wierman}\footnotemark[3]
  \and
  Mihai Anitescu\footnotemark[2] \thanks{Department of Statistics, University of Chicago, Chicago, IL}
}

\headers{Near-Optimal Distributed LQR for Networked Systems}{S. Shin, Y. Lin, G. Qu, A. Wierman, and M. Anitescu}

\maketitle

\begin{abstract}
  This paper studies the trade-off between the degree of decentralization and the performance of a distributed controller in a linear-quadratic control setting. We study a system of interconnected agents over a graph and a distributed controller, called $\kappa$-distributed control, which lets the agents make control decisions based on the state information within distance $\kappa$ on the underlying graph. This controller can tune its degree of decentralization using the parameter $\kappa$ and thus allows a characterization of the relationship between decentralization and performance. We show that under mild assumptions, including stabilizability, detectability, and a {subexponentially} growing graph condition, the performance difference between $\kappa$-distributed control and  centralized optimal control becomes exponentially small in $\kappa$. This result reveals that distributed control can achieve near-optimal performance with a moderate degree of decentralization, and thus it is an effective controller architecture for large-scale networked systems. 
\end{abstract}

\begin{keywords}
  Optimal control, distributed control, networked systems
\end{keywords}

\begin{AMS}
  49N10, 
  93A14, 
  93B70  
\end{AMS}

\section{Introduction}\label{sec:intro}

Because of the increasing complexity of networked systems, distributed control has gained substantial attention in the literature \cite{ho1972team,bamieh2002distributed,motee2008optimal,andreasson2014distributed,furieri2020sparsity,wang2019system}. Distributed control aims to design a set of local controllers that cooperatively optimize the systemwide performance while  having access only to local information within a prescribed range. This architecture has advantages over its centralized counterpart in robustness (failure in a component does not cause system failure), computation (online computation load is small), privacy (global information sharing is not needed), and implementation (less and shorter communication). Furthermore, it has advantages over its decentralized counterpart in performance (communication between agents mitigates the impact of decentralization).

However, the synthesis of an optimal distributed controller is a challenging problem. In particular, the optimal distributed policy can be nonlinear even for a simple {stochastic linear-quadratic control setting} \cite{witsenhausen1968counterexample}, and synthesizing an optimal distributed controller is intractable in general \cite{blondel2000survey}. Several works have studied making the problem tractable by imposing additional structural assumptions, for example,  nested information structure \cite{ho1972team}, finite-dimensional linear policy \cite{maartensson2009gradient,bamieh2002distributed}, quadratic invariance \cite{rotkowitz2005characterization}, and sparsity invariance \cite{furieri2020sparsity}.  Other works have studied linear matrix inequality conditions for the well-posedness, stability, and performance of distributed controllers \cite{langbort2004distributed,d2003distributed,dullerud2004distributed}.  Additionally, recent works on system-level synthesis have proposed a technique to synthesize localized control policy in a scalable manner \cite{wang2019system,anderson2019system}. Even for such optimal distributed policies, however, the nominal performance can be significantly worse than that of the centralized optimal policy because of the structural constraints imposed on the local controllers \cite{kariwala2007fundamental}.

An important gap in the literature between  centralized optimal control and distributed controller synthesis  is the limited understanding of how much performance loss is incurred by decentralization. The general relationship between decentralization and control performance has been studied in various contexts, such as decentralized control \cite{sourias1995best,kariwala2007fundamental,cui2002performance}, cooperative coverage problems \cite{meng2021price}, and multiagent optimization \cite{wang2017vanishing}, but the trade-off relationship between {\it the degree of decentralization} (determined by the structural constraint imposed on the controller) and {\it the distributed controller's performance} has not been reported in the literature. Since  myriad  possible communication structures for distributed control exist, without a guiding principle for trading off the degree of decentralization against performance, it is difficult to appropriately balance the level of decentralization and control performance. This situation motivates an important open question: {\it Can one quantify the trade-off between decentralization and performance?} 

We seek to address this question by analyzing the performance of a {\it $\kappa$-distributed linear-quadratic regulator} (LQR). This controller allows the user to tune the degree of decentralization using the user-defined parameter $\kappa$. The system under study is  a linear system composed of agents interconnected over a graph, which appears in a wide range of applications \cite{patel2019economic,andreasson2014distributed} and has been considered in many works on distributed control \cite{motee2008optimal,wang2019system}. The graph allows us to construct a limited-range communication structure based on the distance on the graph. In particular, $\kappa$-distributed LQR implements truncated state feedback, which  considers only the agents within a prescribed distance $\kappa$, while ignoring the agents beyond that distance (Figure \ref{fig:kappa-dec}). This controller becomes more decentralized for small $\kappa$ and less decentralized for large $\kappa$. Thus, $\kappa$-distributed LQR provides a ground for investigating the trade-off between decentralization and performance. {A similar decentralized overlapping control architecture was studied in \cite[Chapter 8]{siljak2011decentralized}. In this direction, the suboptimality bounds that we discuss here have been studied \cite{siljak2011decentralized}. However, an explicit relationship between the degree of decentralization and the suboptimality bound, as we do in this work, has not been established. }

\begin{figure}
  \centering
  \begin{tikzpicture}[scale=.25]\footnotesize
  \pgfdeclarelayer{fg}
  \pgfdeclarelayer{bg}
  \pgfdeclarelayer{nodelayer}
  \pgfdeclarelayer{edgelayer}
  \pgfsetlayers{bg,edgelayer,nodelayer,fg}
	\begin{pgfonlayer}{nodelayer}
    \node [circle,fill=black,scale=.4] (0) at (-10.5, 3.75) {};
    \node [circle,fill=black,scale=.4] (1) at (-9.25, 6.25) {};
    \node [circle,fill=black,scale=.4] (2) at (-9.5, 1.25) {};
    \node [circle,fill=black,scale=.4] (3) at (-6.25, 7) {};
    \node [circle,fill=black,scale=.4] (4) at (-6.25, 2) {};
    \node [circle,fill=black,scale=.4] (5) at (-4.25, 3.75) {};
    \node [circle,fill=black,scale=.4] (6) at (-4, 6) {};
    \node [circle,fill=black,scale=.4] (7) at (-5.5, 9.25) {};
    \node [circle,fill=black,scale=.4] (8) at (-1.5, 7.25) {};
    \node [circle,fill=black,scale=.4] (9) at (-1.75, 9.5) {};
    \node [circle,fill=black,scale=.4] (10) at (1, 6.75) {};
    \node [circle,fill=black,scale=.4] (11) at (-11, 7.5) {};
    \node [circle,fill=black,scale=.4] (12) at (-11, 9.75) {};
    \node [circle,fill=black,scale=.4] (13) at (-13.75, 6.75) {};
    \node [circle,fill=black,scale=.4] (14) at (-11.5, -1) {};
    \node [circle,fill=black,scale=.4] (15) at (-13.5, 0.5) {};
    \node [circle,fill=black,scale=.4] (16) at (-13.5, -3.75) {};
    \node [circle,fill=black,scale=.4] (17) at (-8.75, -1.75) {};
    \node [circle,fill=black,scale=.4] (18) at (-8.25, -4) {};
    \node [circle,fill=black,scale=.4] (19) at (-2, 0.75) {};
    \node [circle,fill=black,scale=.4] (20) at (-1.25, 3.75) {};
    \node [circle,fill=black,scale=.4] (21) at (1, 2.25) {};
    \node [circle,fill=black,scale=.4] (22) at (-1.25, -2.25) {};
    \node [circle,fill=black,scale=.4] (23) at (2, -2.5) {};
    \node [circle,fill=black,scale=.4] (24) at (3.25, 0.5) {};
    \node [circle,fill=black,scale=.4] (25) at (-3.25, -4.25) {};
    \node [circle,fill=black,scale=.4] (26) at (5.5, 1.5) {};
    \node [circle,fill=black,scale=.4] (27) at (6, 4.25) {};
  \end{pgfonlayer}
	\begin{pgfonlayer}{edgelayer}
    \draw (1.center) to (3.center);
    \draw (1.center) to (0.center);
    \draw (12.center) to (11.center);
    \draw (11.center) to (13.center);
    \draw (11.center) to (1.center);
    \draw (7.center) to (3.center);
    \draw (3.center) to (6.center);
    \draw (6.center) to (5.center);
    \draw (5.center) to (4.center);
    \draw (4.center) to (2.center);
    \draw (2.center) to (0.center);
    \draw (2.center) to (14.center);
    \draw (14.center) to (16.center);
    \draw (14.center) to (15.center);
    \draw (14.center) to (17.center);
    \draw (17.center) to (18.center);
    \draw (4.center) to (19.center);
    \draw (5.center) to (20.center);
    \draw (20.center) to (21.center);
    \draw (19.center) to (21.center);
    \draw (19.center) to (22.center);
    \draw (22.center) to (23.center);
    \draw (23.center) to (24.center);
    \draw (24.center) to (21.center);
    \draw (24.center) to (26.center);
    \draw (26.center) to (27.center);
    \draw (6.center) to (8.center);
    \draw (8.center) to (10.center);
    \draw (8.center) to (9.center);
    \draw (22.center) to (25.center);
  \end{pgfonlayer}
  \begin{pgfonlayer}{bg}

    \begin{scope}[opacity=.15, transparency group]
      \draw[red,line width=22,line cap=round] (4.center) to (2.center);
      \draw[red,line width=22,line cap=round] (2.center) to (0.center);
      \draw[red,line width=22,line cap=round] (2.center) to (14.center);
      \draw[red,line width=22,line cap=round] (5.center) to (4.center);
      \draw[red,line width=22,line cap=round] (4.center) to (2.center);
      \draw[red,line width=22,line cap=round] (4.center) to (19.center);
      \draw[red,line width=22,line cap=round] (1.center) to (0.center);
      \draw[red,line width=22,line cap=round] (2.center) to (0.center);
      \draw[red,line width=22,line cap=round] (2.center) to (14.center);
      \draw[red,line width=22,line cap=round] (14.center) to (16.center);
      \draw[red,line width=22,line cap=round] (14.center) to (15.center);
      \draw[red,line width=22,line cap=round] (14.center) to (17.center);
    \end{scope}

    \begin{scope}[opacity=.15, transparency group]
      \draw[blue,line width=22,line cap=round] (20.center) to (21.center);
      \draw[blue,line width=22,line cap=round] (19.center) to (21.center);
      \draw[blue,line width=22,line cap=round] (24.center) to (21.center);
      \draw[blue,line width=22,line cap=round] (5.center) to (20.center);
      \draw[blue,line width=22,line cap=round] (20.center) to (21.center);
      \draw[blue,line width=22,line cap=round] (4.center) to (19.center);
      \draw[blue,line width=22,line cap=round] (19.center) to (21.center);
      \draw[blue,line width=22,line cap=round] (19.center) to (22.center);
      \draw[blue,line width=22,line cap=round] (23.center) to (24.center);
      \draw[blue,line width=22,line cap=round] (24.center) to (21.center);
      \draw[blue,line width=22,line cap=round] (24.center) to (26.center);
    \end{scope}

    \begin{scope}[opacity=.3, transparency group]
      \draw[red,line width=15,line cap=round] (4.center) to (2.center);
      \draw[red,line width=15,line cap=round] (2.center) to (0.center);
      \draw[red,line width=15,line cap=round] (2.center) to (14.center);
    \end{scope}
    
    \begin{scope}[opacity=.3, transparency group]
      \draw[blue,line width=15,line cap=round] (20.center) to (21.center);
      \draw[blue,line width=15,line cap=round] (19.center) to (21.center);
      \draw[blue,line width=15,line cap=round] (24.center) to (21.center);
    \end{scope}
\end{pgfonlayer}
  \begin{pgfonlayer}{fg}
    \node [circle,draw=red,line width=2,scale=1.5,label={[red]below:$i$},label={[red!80!white,xshift=-7,yshift=13]left:$\cN_\cG^1[i]$},label={[red!40!white,xshift=-22,yshift=-18]left:$\cN_\cG^2[i]$}] (red) at (2) {};
    \node [circle,draw=blue,line width=2,scale=1.5,label={[blue]below:$j$},label={[blue!80!white,xshift=-2,yshift=10]right:$\cN_\cG^1[j]$},label={[blue!40!white,xshift=15,yshift=-25]right:$\cN_\cG^2[j]$}] (blue) at (21) {};
  \end{pgfonlayer}
\end{tikzpicture}

  \caption{Illustration of the communication structure of $\kappa$-distributed control for $\kappa=1$ and $2$.}\label{fig:kappa-dec}
\end{figure}
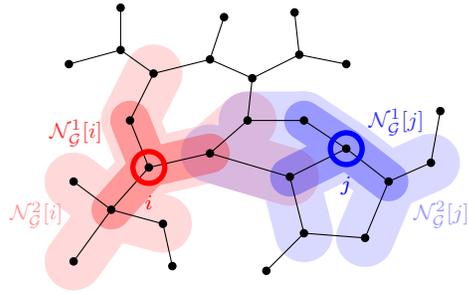

In this work we show that the nominal performance of the $\kappa$-distributed policy, measured by a  quadratic performance criterion, becomes exponentially close to that of the centralized optimal policy as $\kappa$ is increased. Thus, one can indeed improve the performance of the distributed controller by {\it enforcing a less restrictive information exchange structure}. Moreover, the exponential relationship reveals that distributed control can achieve {\it near-optimal performance} with a moderate degree of decentralization. This result manifests the effectiveness of the distributed control architecture for the control of large-scale networked systems. In addition,  our result is obtained without making restrictive structural assumptions; rather, it  relies only on the standard assumptions in linear system theory (stabilizability and detectability) and a mild assumption on the graph topology ({subexponential} growth condition). These assumptions are substantially weaker than the structural assumptions employed by prior works on near-optimal distributed control, such as decentrally optimizability \cite{yasuda1980decentrally} and linear periodic controller \cite{miller2013near}. Thus, our result can be applied to a wide range of networked system control problems. 

\subsection{Contributions}

The main contribution of this work is  the quantitative characterization of the performance of $\kappa$-distributed LQR. We show that the optimality gap of $\kappa$-distributed LQR, compared with  centralized optimal control, is exponentially small in $\kappa$. Specifically, under several mild assumptions including stabilizability, detectability, and {subexponentially} growing graph assumptions, we show that the $\kappa$-distributed LQR is stabilizing for sufficiently large $\kappa$ and that the optimality gap decays exponentially in $\kappa$ (Theorems \ref{thm:stab}, \ref{thm:regret}). That is, one can achieve near-optimal performance in the sense that the optimality gap can be made arbitrarily small by choosing a sufficiently large $\kappa$. This result rigorously quantifies the trade-off between performance and decentralization and, in turn, provides a guiding principle for balancing the degree of decentralization and control performance. 

The key technical result underlying our analysis is the exponential decay property of the optimal gain (Theorem \ref{thm:decay}). By leveraging the state-of-the-art perturbation bounds in graph-structured optimization \cite{shin2022exponential,motee2008optimal,qu2022scalable}, we show that the optimal gain from one agent's state to another agent's control decays exponentially with respect to the distance between the two agents. This result provides an intuitive justification for the near-optimality: As the gains from the agents more than $\kappa$ hops away are exponentially small in $\kappa$, truncating them does not cause  significant performance loss. This approach is novel in that, to the best of our knowledge, the exponential decay property has not been used for the performance analysis of distributed control. 

We complement the stability and performance results by establishing sufficient conditions for the uniformity of the main assumptions (stabilizability and detectability) in the system size (Theorem \ref{thm:uniform}). These conditions in turn guarantee the uniformity of the stability and performance results in the system size. The established sufficient conditions allow us to apply our main results to an even broader class of systems whose satisfaction of uniform stabilizability and detectability conditions might not be immediately clear. These situations may arise when only a subset of nodes are controlled or only a subset of states are observed by the performance index. The proposed sufficient conditions can serve as a design principle for large-scale systems that facilitates the application of distributed control.

\subsection{Related Works}
Our main results are built on the exponential decay property in the perturbation analysis of graph-structured optimization problems. This property was first studied in  early works on block-banded matrices \cite{demko1984decay}, wherein the authors showed the exponential decay of the entries of the inverse of band matrices. Recent works \cite{shin2020decentralized,shin2022exponential} have extended these results to analyze the sensitivity of the solutions of graph-structured optimization problems. In the context of optimal control, the exponential decay property on the time domain has been studied in \cite{lin2021perturbation,na2020exponential,xu2018exponentially,grune2020exponential}. The connection between exponential decay in graph-structured optimization and the exponential decay in optimal control problems has been made in \cite{shin2021controllability}. However, these results do not establish the exponential decay of optimal gain of LQR with respect to the spatial distance (i.e., distance on the graph).

The derivation of exponential decay in LQR gain (Theorem \ref{thm:decay}) from the results in \cite{shin2022exponential} is nontrivial;  several novel ideas are required. First, we consider a finite-horizon LQR problem for a networked system as a graph-structured optimization problem induced by a space-time graph (Figure \ref{fig:space-time-graph}, bottom). Second, we replace the terminal cost with an infinite-horizon cost-to-go function to make the solution equal to the infinite-horizon counterpart. This allows applying the result in \cite{shin2022exponential} to analyze the infinite-horizon LQR solution. Third, we establish the relationship between the uniform regularity conditions \cite[Assumption 5.2-5.3]{shin2022exponential} and system-theoretical properties to express the decay constants in terms of constants related to system-theoretical properties.

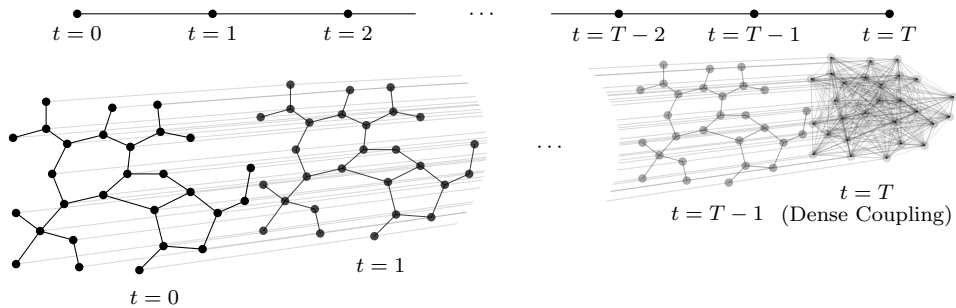
\begin{figure}[t]
  \centering
  \begin{tikzpicture}[scale=.9]\footnotesize
  \foreach \x/\l in {0/0,1/1,2/2,4/T-2,5/T-1,6/T}
  \node[circle,fill=black,scale=.4,label=below:{$t=\l$}] at (\x*2,0) {};
  \draw (0,0)--(5,0);
  \draw (7,0)--(12,0);
  \node at (6,0) {$\cdots$};
\end{tikzpicture}
  \begin{tikzpicture}[scale=.16]\footnotesize
  \foreach \z/\s/\l/\ll in {0/1/A/0,  1.2/.9/B/1, 3/.7/C/T-1, 3.8/.6/D/T}{
    \node [opacity=1-\z/4.5,circle,fill=black,scale=.4] (\l0) at (\s*-10.5+\z*16, \s*3.75+\z*2) {};
    \node [opacity=1-\z/4.5,circle,fill=black,scale=.4] (\l1) at (\s*-9.25+\z*16, \s*6.25+\z*2) {};
    \node [opacity=1-\z/4.5,circle,fill=black,scale=.4] (\l2) at (\s*-9.5+\z*16, \s*1.25+\z*2) {};
    \node [opacity=1-\z/4.5,circle,fill=black,scale=.4] (\l3) at (\s*-6.25+\z*16, \s*7+\z*2) {};
    \node [opacity=1-\z/4.5,circle,fill=black,scale=.4] (\l4) at (\s*-6.25+\z*16, \s*2+\z*2) {};
    \node [opacity=1-\z/4.5,circle,fill=black,scale=.4] (\l5) at (\s*-4.25+\z*16, \s*3.75+\z*2) {};
    \node [opacity=1-\z/4.5,circle,fill=black,scale=.4] (\l6) at (\s*-4+\z*16, \s*6+\z*2) {};
    \node [opacity=1-\z/4.5,circle,fill=black,scale=.4] (\l7) at (\s*-5.5+\z*16, \s*9.25+\z*2) {};
    \node [opacity=1-\z/4.5,circle,fill=black,scale=.4] (\l8) at (\s*-1.5+\z*16, \s*7.25+\z*2) {};
    \node [opacity=1-\z/4.5,circle,fill=black,scale=.4] (\l9) at (\s*-1.75+\z*16, \s*9.5+\z*2) {};
    \node [opacity=1-\z/4.5,circle,fill=black,scale=.4] (\l10) at (\s*1+\z*16, \s*6.75+\z*2) {};
    \node [opacity=1-\z/4.5,circle,fill=black,scale=.4] (\l11) at (\s*-11+\z*16, \s*7.5+\z*2) {};
    \node [opacity=1-\z/4.5,circle,fill=black,scale=.4] (\l12) at (\s*-11+\z*16, \s*9.75+\z*2) {};
    \node [opacity=1-\z/4.5,circle,fill=black,scale=.4] (\l13) at (\s*-13.75+\z*16, \s*6.75+\z*2) {};
    \node [opacity=1-\z/4.5,circle,fill=black,scale=.4] (\l14) at (\s*-11.5+\z*16, \s*-1+\z*2) {};
    \node [opacity=1-\z/4.5,circle,fill=black,scale=.4] (\l15) at (\s*-13.5+\z*16, \s*0.5+\z*2) {};
    \node [opacity=1-\z/4.5,circle,fill=black,scale=.4] (\l16) at (\s*-13.5+\z*16, \s*-3.75+\z*2) {};
    \node [opacity=1-\z/4.5,circle,fill=black,scale=.4] (\l17) at (\s*-8.75+\z*16, \s*-1.75+\z*2) {};
    \node [opacity=1-\z/4.5,circle,fill=black,scale=.4] (\l18) at (\s*-8.25+\z*16, \s*-4+\z*2) {};
    \node [opacity=1-\z/4.5,circle,fill=black,scale=.4] (\l19) at (\s*-2+\z*16, \s*0.75+\z*2) {};
    \node [opacity=1-\z/4.5,circle,fill=black,scale=.4] (\l20) at (\s*-1.25+\z*16, \s*3.75+\z*2) {};
    \node [opacity=1-\z/4.5,circle,fill=black,scale=.4] (\l21) at (\s*1+\z*16, \s*2.25+\z*2) {};
    \node [opacity=1-\z/4.5,circle,fill=black,scale=.4] (\l22) at (\s*-1.25+\z*16, \s*-2.25+\z*2) {};
    \node [opacity=1-\z/4.5,circle,fill=black,scale=.4] (\l23) at (\s*2+\z*16, \s*-2.5+\z*2) {};
    \node [opacity=1-\z/4.5,circle,fill=black,scale=.4] (\l24) at (\s*3.25+\z*16, \s*0.5+\z*2) {};
    \node [opacity=1-\z/4.5,circle,fill=black,scale=.4] (\l25) at (\s*-3.25+\z*16, \s*-4.25+\z*2) {};
    \node [opacity=1-\z/4.5,circle,fill=black,scale=.4] (\l26) at (\s*5.5+\z*16, \s*1.5+\z*2) {};
    \node [opacity=1-\z/4.5,circle,fill=black,scale=.4] (\l27) at (\s*6+\z*16, \s*4.25+\z*2) {};
    \draw[opacity=1-\z/4.5] (\l1.center) to (\l3.center);
    \draw[opacity=1-\z/4.5] (\l1.center) to (\l0.center);
    \draw[opacity=1-\z/4.5] (\l12.center) to (\l11.center);
    \draw[opacity=1-\z/4.5] (\l11.center) to (\l13.center);
    \draw[opacity=1-\z/4.5] (\l11.center) to (\l1.center);
    \draw[opacity=1-\z/4.5] (\l7.center) to (\l3.center);
    \draw[opacity=1-\z/4.5] (\l3.center) to (\l6.center);
    \draw[opacity=1-\z/4.5] (\l6.center) to (\l5.center);
    \draw[opacity=1-\z/4.5] (\l5.center) to (\l4.center);
    \draw[opacity=1-\z/4.5] (\l4.center) to (\l2.center);
    \draw[opacity=1-\z/4.5] (\l2.center) to (\l0.center);
    \draw[opacity=1-\z/4.5] (\l2.center) to (\l14.center);
    \draw[opacity=1-\z/4.5] (\l14.center) to (\l16.center);
    \draw[opacity=1-\z/4.5] (\l14.center) to (\l15.center);
    \draw[opacity=1-\z/4.5] (\l14.center) to (\l17.center);
    \draw[opacity=1-\z/4.5] (\l17.center) to (\l18.center);
    \draw[opacity=1-\z/4.5] (\l4.center) to (\l19.center);
    \draw[opacity=1-\z/4.5] (\l5.center) to (\l20.center);
    \draw[opacity=1-\z/4.5] (\l20.center) to (\l21.center);
    \draw[opacity=1-\z/4.5] (\l19.center) to (\l21.center);
    \draw[opacity=1-\z/4.5] (\l19.center) to (\l22.center);
    \draw[opacity=1-\z/4.5] (\l22.center) to (\l23.center);
    \draw[opacity=1-\z/4.5] (\l23.center) to (\l24.center);
    \draw[opacity=1-\z/4.5] (\l24.center) to (\l21.center);
    \draw[opacity=1-\z/4.5] (\l24.center) to (\l26.center);
    \draw[opacity=1-\z/4.5] (\l26.center) to (\l27.center);
    \draw[opacity=1-\z/4.5] (\l6.center) to (\l8.center);
    \draw[opacity=1-\z/4.5] (\l8.center) to (\l10.center);
    \draw[opacity=1-\z/4.5] (\l8.center) to (\l9.center);
    \draw[opacity=1-\z/4.5] (\l22.center) to (\l25.center);

    \node at (-6+4*\s+16*\z,-3*\s-3.5+\z*2) {$t=\ll$};
  }

  \def\z{3.8}
  \def\s{0.6}
  \node at (-6+4*\s+16*\z,-3*\s-3.5+\z*2-2) {(Dense Coupling)};
  
  \foreach \s in {0,...,27}{
    \foreach \a/\b in {A/B, B/C, C/D}{
      \draw[opacity=.15] (\a\s.center) to (\b\s.center);
    }
  }

  \foreach \s in {0,...,27}{
    \foreach \t in {0,...,27}{
      \draw[opacity=.05] (D\s.center) to (D\t.center);
    }
  }

  \fill[white] (22.5,-1) to[bend right=50] (22.5,13) to  (33.5,13) to[bend left=50] (33.5,-1) to (22.5,-1);
  \node at (31,6) {$\cdots$};
\end{tikzpicture}

  \caption{Illustration of the temporal graph (top) and space-time graph (bottom).}\label{fig:space-time-graph}
\end{figure}

The exponential decay in the optimal LQR gain has been previously studied in the work on {\it spatially decaying systems} \cite{motee2008optimal}. The authors aimed to show that for the continuous-time linear-quadratic optimal control setting and under several technical assumptions, there is exponential decay in the optimal gain. Although  later the original paper was found to have an error \cite{curtain2009comments}, it was subsequently reported that some weaker forms of the main theorem still hold true \cite{curtain2011riccati,moteeauthors}. These results  differ from ours in several ways. (i) Their results are asymptotic (the graph is infinite), whereas ours are nonasymptotic (the graph is finite); the nonasymptotic and explicit nature of our results allow the subsequent stability and performance analysis. (ii) Their setting is continuous time, whereas ours is discrete. (iii) Their proof is based on infinite-dimensional operator theory, whereas ours is based purely  on linear algebra in finite-dimensional spaces.
  
Beyond the LQR setting, the exponential decay property has been studied in various other contexts, including  combinatorial optimization \cite{gamarnik2014correlation}, multiagent reinforcement learning \cite{qu2022scalable,qu2020scalable_ave,lin2021multi}, and statistical physics \cite{bandyopadhyay2008counting,li2013correlation}, and is known to lead to effective distributed algorithm designs based on the truncation of long-range interactions \cite{gamarnik2014correlation,qu2022scalable}. While sharing some similar flavors, the analysis technique in our work deals with continuous variables, which is  different from the techniques in \cite{gamarnik2014correlation,qu2022scalable,qu2020scalable_ave,lin2021multi,bandyopadhyay2008counting,li2013correlation} that deal with discrete variables. {Moreover, the exponential decay in discrete variable settings (in particular, the Markov decision process) derives from discount factors \cite{qu2022scalable} or bounded total variation \cite{qu2020scalable_ave}, whereas the exponential decay in this paper derives from the regularity of the problem.}

\subsection{Notation}
The set of real numbers and the set of integers are denoted by $\mathbb{R}$ and $\mathbb{I}$, respectively. We define $\mathbb{I}_{A}:=\mathbb{I}\cap A$, $\mathbb{I}_{>0}:=\mathbb{I}_{(0,\infty)}$, and $\mathbb{I}_{\geq 0}:=\mathbb{I}_{[0,\infty)}$. We use the syntax $[M_1;\cdots;M_n]:=[M_1^\top\,\cdots\,M_n^\top]^\top$; $\{M_i\}_{i\in \cI}:=[M_{i_1}; \cdots; M_{i_{m}}]$; $\{M_{ij}\}_{i\in \cI,j\in \cJ}:=\{\{M_{ij}^\top\}_{j\in \cJ}^\top\}_{i\in \cI}$ for strictly ordered sets $\cI=\{i_1<\cdots<i_{m}\}$ and $\cJ=\{j_1<\cdots<j_{n}\}$. Furthermore, $v[i]$ is the $i$-th component of vector $v$; $M[i,j]$ is the $(i,j)$-th component of matrix $M$; $v[\cI]:=\{v[i]\}_{i\in \cI}$; and $M[\cI,\cJ]:=\{M[i,j]\}_{i\in \cI,j\in \cJ}$. {Vector 2-norms and induced 2-norms of matrices are denoted by $\Vert\cdot\Vert$}. For matrices $A$ and $B$, $A\succ(\succeq) B$ indicates that $A-B$ is positive (semi)-definite. The largest and smallest eigenvalues of symmetric matrices are denoted by $\olambda(\cdot),\ulambda(\cdot)$. The identity matrix is denoted by $\bI$, and the zero matrix or vector is denoted by $\bzero$. A graph $\cG=(\cV,\cE)$ is a pair of a node set $\cV$ and an edge set $\cE\subseteq \{\{i,j\}\subseteq\cV :i\neq j\}$. The closed neighborhood of $i\in\cV$ on graph $\cG$ is denoted by $\cN_\cG[i]:=\{i\}\cup\{j\in\cV:\{i,j\}\in\cE\}$. {The distance between $i$ and $j$ on graph $\cG=(\cV,\cE)$, denoted by $d_\cG(i,j)$, is the number of edges in the shortest path connecting $i$ and $j$.}

\section{Problem Setting}\label{sec:settings}

We study a networked system that can be expressed by a discrete-time, linear, time-invariant system over a graph:
\begin{subequations}\label{eqn:settings}
  \begin{align}\label{eqn:sys}
    x_{i}(t+1) = \sum_{j\in\cN_\cG[i] }\left(A_{ij} x_j(t) + B_{ij}u_j(t)\right),\quad \forall i\in \cV,\; t\in\mathbb{I}_{\geq 0}.
  \end{align}
  {
  Here $\cG:=(\cV:=\mathbb{I}_{[1,N]},\cE)$ is a graph; $x_i(t)\in\mathbb{R}^{n_{x_i}}$ and $u_i(t)\in\mathbb{R}^{n_{u_i}}$ are the state and control at node $i\in\cV$ and time index $t\in\mathbb{I}_{\geq 0}$; $A_{ij}\in\mathbb{R}^{n_{x_i}\times n_{x_j}}$; and $B_{ij}\in\mathbb{R}^{n_{x_i}\times n_{u_j}}$.} The stage performance index at node $i$ is given by a quadratic function
  \begin{align}\label{eqn:perf}
    \ell_i(x_i,u_i) := (1/2) x_i^\top \left( \sum_{j\in \cN_\cG[i]}Q_{ij} x_j \right) +  (1/2)u^\top_i \left(\sum_{j\in\cN_\cG[i]}R_{ij} u_j\right),\quad \forall i\in\cV,
  \end{align}
\end{subequations}
{where $Q_{ij}\in\mathbb{R}^{n_{x_i}\times n_{x_j}}$; $R_{ij}\in\mathbb{R}^{n_{u_i}\times n_{u_j}}$; and $Q_{ij}=Q_{ji}^\top$ and $R_{ij}=R_{ji}^\top$ for all $i,j\in\cV$. That is, the system is composed of multiple agents, interconnected over graph $\cG$, and the inter-nodal couplings are made through the non-zero $A_{ij}$, $B_{ij}$, $Q_{ij}$, and $R_{ij}$'s.}

\subsection{Centralized LQR}
The centralized LQR problem formulation for the networked system in \eqref{eqn:sys} is  as follows:
\begin{subequations}\label{eqn:lqr}
  \begin{align}
    \min_{\{\bx(t)\}_{t=0}^\infty,\{\bu(t)\}_{t=0}^\infty}\;&\sum_{t=0}^\infty (1/2)\bx(t)^\top\bQ\bx(t)+ (1/2)\bu(t)^\top\bR\bu(t)\label{eqn:lqr-obj}\\
    \st\;&\bx(0)= \obx\label{eqn:lqr-con-1}\\
                                                            &\bx(t+1) = \bA\bx(t) + \bB\bu(t),\quad \forall t=0,1,\cdots.\label{eqn:lqr-con-2}
  \end{align}
\end{subequations}
{Here we let $\bx(t):=\{x_i(t)\}_{i\in\cV}$, $\bu(t):=\{u_i(t)\}_{i\in\cV}$, $\bA:=\{A_{ij}\}_{i,j\in\cV}$, $\bB:=\{B_{ij}\}_{i,j\in\cV}$, $\bQ:=\{Q_{ij}\}_{i,j\in\cV}$, $\bR:=\{R_{ij}\}_{i,j\in\cV}$ (for convenience, $A_{ij}:=\bzero$, $B_{ij}:=\bzero$, $Q_{ij}:=\bzero$, and $R_{ij}:=\bzero$ for $\{i,j\}\notin \mathcal{E}$).} Note that \eqref{eqn:lqr-obj} is the summation of the nodal performance index \eqref{eqn:perf} over the infinite horizon and \eqref{eqn:lqr-con-1} is the {concatenation} of the nodal system dynamics \eqref{eqn:sys} into that of the full system. It is well known that under $\bQ\succeq 0$, $\bR\succ 0$, $(\bA,\bB)$-stabilizability, and $(\bA,\bQ^{1/2})$-detectability, the optimal policy $\bpi^\star(\cdot)$ for \eqref{eqn:lqr} can be expressed by a linear state feedback law $\bpi^\star(\bx) := -\bK^\star\bx$, where $\bK^\star= \{K^\star_{ij}\}_{i,j\in\cV}$ is the optimal gain matrix \cite[Theorem 2.4-2]{lewis2012optimal}. We define the state transition mapping under the optimal gain by $\bphi^\star(\bx):=\bPhi^\star\bx$, where $\bPhi^\star:=\bA-\bB\bK^\star$.

\subsection{$\kappa$-Distributed Control}
The proposed $\kappa$-distributed control policy $\bpi^\kappa(\cdot)$ is defined as $\bpi^\kappa(\bx) := -\bK^\kappa \bx$, where the optimal gain $\bK^\star$ is replaced by the {\it $\kappa$-truncated gain} $\bK^\kappa:=\{K^\kappa_{ij}\}_{i,j\in\cV}$ with $K^\kappa_{ij}:=K^\star_{ij}$ if $d_\cG(i,j)\leq \kappa$ and $\bzero$ otherwise. That is, the $\kappa$-distributed control employs a limited-range state feedback law by  taking into account only the state information within the $\kappa$-hop neighborhood $\cN^\kappa_\cG[i]:=\{j\in\cV:d_\cG(i,j)\leq \kappa\}$ (Figure \ref{fig:kappa-dec}). The $\kappa$-distributed control reduces to a purely decentralized control if $\kappa=0$, in the sense that the local controllers only have access to the state information of itself, and reduces to a purely centralized control if $\kappa\geq \max_{i,j\in\cV}d_\cG(i,j)$, in the sense that the local controllers have access to the full state information. The state transition mapping is defined by $\bphi^\kappa(\bx):=\bPhi^\kappa\bx$, where $\bPhi^\kappa:=\bA-\bB\bK^\kappa$. {In summary, the nodal feedback law of centralized and $\kappa$-distributed LQR can be expressed as follows:
\begin{align*}
  u_i(t) = \sum_{j\in\cV} -K^\star_{ij}x_j(t)\quad\text{(Centralized)};\quad
  u_i(t) = \sum_{j\in\cN_\cG^\kappa[i]} -K^\star_{ij}x_j(t)\quad\text{($\kappa$-distributed)}.
\end{align*}
That is, $\kappa$-distributed LQR is a truncated version of centralized LQR.}

  \subsection{Examples}\label{sec:eg}
  We now discuss two motivating examples.
  We use these examples to demonstrate the theoretical developments in the subsequent sections.
  {Besides the examples below, the networked system control problems also appear in various consensus problems for robotic networks (e.g., attitude alignment, flocking, coordinated decision making) \cite{ren2005survey,andreasson2014distributed} as well as the operation of storage networks \cite{lejarza2021economic}.}

  \begin{example}[Building Temperature Control \cite{patel2019economic}]\label{eg:hvac}
  Heating, ventilation, and air conditioning (HVAC) systems in large buildings with multiple zones can be modeled as networked systems. The control goal is to maintain the temperature of each zone at the desired setpoint while dealing with disturbances. We assume that all the constant disturbances (e.g., heat generation, effect of ambient conditions) are eliminated by expressing the state/control variables as deviation variables around the desired steady state; thus, the control goal becomes driving the system to the origin. The model and nodal performance index are  as follows:
  \begin{align*}
    \dot{U}_i = T_i,\quad\dot{T}_i = - \sum_{j\in\cN_{\cG}[i]} k_{ij}(T_i-T_j)  + \eta_1 u_i,\quad
    \ell_i(U_i,T_i,u_i) = \eta_2^2 U_i^2 + \eta_3^2 T_i^2 + u_i^2 ,
  \end{align*}
  where $T_i$ is the temperature of zone $i$; $U_i$ is its integrator of $T_i$; $\ell_i(\cdot)$ is the stage performance index of zone $i$; $k_{ij}$ characterizes the degree of coupling (heat exchange) between zones $i$ and $j$; {$u_i$ is the manipulated heat generation/absorption of zone $i$}; and $\eta_1,\eta_2,\eta_3$ are the constants we will use later. The integrator is not necessary for the nominal setting, but it is used in practice to deal with the offsets caused by disturbances. The explicit Euler discretization with $x_i :=[U_i;T_i]$ and time interval $\Delta t$ yields
  $A_{ii}:=
    \begin{bmatrixfn}
      1&\Delta t\\
      0&1 - (\Delta t)\sum_{j\in\cN_{\cG}[i]}k_{ij}\\
    \end{bmatrixfn}$,
    $A_{ij}:=
    \begin{bmatrixfn}
      0&0\\
      0&(\Delta t)k_{ij}\\
    \end{bmatrixfn}$,
    $B_{ii}:=
    \begin{bmatrixfn}
      0\\
      \eta_1(\Delta t)
    \end{bmatrixfn}$,
    $Q_{ii} :=
    \begin{bmatrixfn}
      \eta_2^2\\
      &\eta_3^2
    \end{bmatrixfn}$, and $R_{ii} := 1$ {(off-diagonal blocks of $\bB$, $\bQ$, $\bR$ are zero)}. Here we aim to design a distributed PI controller of the form $u_i = \sum_{j\in \cN^\kappa_\cG[i]}-(K^U_{ij} U_j + K^T_{ij} T_j)$, where $K^U_{ij}$ and $K^T_{ij}$ are the gains. \hfill$\blacksquare$
\end{example}

\begin{example}[Frequency Control of Power Systems \cite{andreasson2014distributed}]\label{eg:freq} 
  The frequency control problem in AC power networks can be expressed as a networked system control problem. Here we consider a DC-approximated model, which assumes that the voltage magnitudes are constant, the phase angle differences between adjacent buses are small, and the network is dominantly inductive.
  The control goal is to minimize the deviation of frequencies from the common reference frequency while dealing with constant disturbances. We assume that all the constant disturbances (e.g., loads) are eliminated by expressing the state/control variables as deviation variables around the desired steady state. The model and nodal performance indexes are  as follows:
  \begin{align*}
    \dot{\theta}_i = \omega_i,\quad\dot{\omega}_i  = -\sum_{j\in\cN_{\cG}[i]} k_{ij}(\theta_i-\theta_j) + \eta_1 u_i,\quad \ell_i(\theta_i,\omega_i,u_i)=\eta^2_2 \theta^2_i + \eta_3^2 \omega_i^2 + u^2_i ,
  \end{align*}
  where $\theta_i$ is the phase angle of bus $i$ (relative to a moving reference frame), $\omega_i$ is the frequency of bus $i$, $\ell_i(\cdot)$ is the stage performance index of bus $i$, $k_{ij}$ is the line susceptance, multiplied by the voltage magnitudes on both ends and divided by inertia of the generator, $u_i$ is the power injection, and $\eta_1,\eta_2,\eta_3$ are the constants we will use later. The explicit Euler discretization with $x_i :=[\theta_i;\omega_i]$ and time interval $\Delta t$ yields
  $A_{ii}:=
    \begin{bmatrixfn}
      1&\Delta t\\
      - (\Delta t)\sum_{j\in\cN_{\cG}[i]}k_{ij}&1 \\
    \end{bmatrixfn}$,
    $A_{ij}:=
    \begin{bmatrixfn}
      0&0\\
      (\Delta t)k_{ij}&0\\
    \end{bmatrixfn}$,
    $B_{ii}:=
    \begin{bmatrixfn}
      0\\
      \eta_1(\Delta t)
    \end{bmatrixfn}$,
    $Q_{ii} :=
    \begin{bmatrixfn}
      \eta_2^2\\
      &\eta_3^2
    \end{bmatrixfn}$,
    and $R_{ii} := 1$ {(off-diagonal blocks of $\bB$, $\bQ$, $\bR$ are zero)}. Here we aim to design a distributed controller of the form $u_i = \sum_{j\in \cN^\kappa_\cG[i]}-(K^\theta_{ij} \theta_j + K^\omega_{ij} \omega_j)$, where $K^\theta_{ij}$ and $K^\omega_{ij}$ are the gains. Typically, the phase angle cannot be obtained by simply integrating the frequency but must be measured via a phasor measurement unit \cite{phadke1993synchronized}. \hfill$\blacksquare$
\end{example}

\section{Exponential Decay in Optimal Gain}\label{sec:decay}
A natural requirement for the $\kappa$-truncated gain $\bK^\kappa$ to be a good approximation of the optimal gain $\bK^\star$ is that the block entries $K^\star_{ij}$ for $i,j\in\cV$ such that $d_\mathcal{G}(i, j) > \kappa$ are sufficiently small. In this section we show that under $\bQ\succeq 0$, $\bR\succ 0$, $(\bA,\bB)$-stabilizability, and $(\bA,\bQ^{1/2})$-detectability, the optimal gain $K^\star_{ij}$ from $x_j$ to $u_i$ decays exponentially in the distance $d_{\cG}(i,j)$ between $i$ and $j$. We first define three basic concepts: stability, stabilizability, and detectability. Although these are commonly used concepts in the control literature, we restate them here to explicitly introduce the associated parameters. 
\begin{definition}\label{def:main}
  We define the following for $L>0$ and $\alpha\in[0,1)$:
  \begin{enumerate}[label=(\alph*)]
  \item\label{def:stab} $\bPhi$ is $(L,\alpha)$-stable if $ \|\bPhi^t\| \leq  L \alpha^t$ for $t\in\mathbb{I}_{\geq 0}$.
  \item\label{def:stabil} $(\bA,\bB)$ is $(L,\alpha)$-stabilizable if $\exists\obK$: $\|\obK\|\leq L$ and $\bA-\bB\obK$ is $(L,\alpha)$-stable.
  \item\label{def:detect} $(\bA,\bC)$ is $(L,\alpha)$-detectable if $(\bA^\top,\bC^\top)$ is $(L,\alpha)$-stabilizable.
  \end{enumerate}
\end{definition}
Note that Definition \ref{def:main} is not altering the standard definitions. For example, the standard definition of stability (in discrete-time setting) is $sr(\bPhi)<1$, where $sr(\cdot)$ denotes the spectral radius. By \cite[Theorem 5.6.12 and Corollary 5.6.13]{horn2012matrix}, there always exist $L>0$ and $\alpha\in(0,1)$ such that $ \|\bPhi^t\| \leq  L \alpha^t$ if and only if $\bPhi$ is stable. Similarly, the standard definition of $(\bA,\bB)$-stabilizability is the existence of $\obK$ such that $sr(\bA-\bB\obK)<1$, and there always exist $L>0$ and $\alpha\in(0,1)$ such that $\|\obK\|\leq L$ and $\|(\bA-\bB\obK)^t\|\leq L\alpha^t$. We call $\obK$ in Definition \ref{def:main}\ref{def:stabil} $(L,\alpha)$-stabilizing feedback gain for $(\bA,\bB)$. Note that stabilizability and detectability are relaxations of controllability and observability. We now formally introduce the main assumption.

\begin{assumption}\label{ass:main}
  There exist $L>1$, $\alpha\in(0,1)$, and $\gamma\in(0,1)$ such that
  \begin{enumerate}[label=(\alph*)]
  \item\label{ass:bdd} $\|\bA\|,\|\bB\|,\|\bQ\|,\|\bR\|\leq L$;
  \item\label{ass:cvx} $\bR\succeq \gamma \bI$;
  \item\label{ass:stab} $(\bA,\bB)$ is $(L,\alpha)$-stabilizable;
  \item\label{ass:dect} $\bQ\succeq \bzero$ , and $(\bA,\bQ^{1/2})$ is $(L,\alpha)$-detectable.
  \end{enumerate} 
\end{assumption}
Here we use common parameters (e.g., $\|\bA\|\leq L$, $\|\bB\|\leq L$, {$\|\overline{\bK}\|\leq L$}, $\cdots$) rather than introducing parameters for each bound (e.g., $\|\bA\|\leq L_{\bA}$, $\|\bB\|\leq L_{\bB}$, {$\|\overline{\bK}\|\leq L_{\overline{\bK}}$}, $\cdots$) to reduce the notational burden. Note that we consistently use $L$ for upper bounds, $\gamma$ for strictly positive lower bounds, and $\alpha$ for the upper bounds strictly less than $1$. {We assume $L>1$, $\gamma\in(0,1)$, and $\alpha\in(0,1)$, rather than $L,\gamma,\alpha>0$ to derive the results in a simple form.} Note that Assumption \ref{ass:main} is a standard assumption used in the classical LQR literature. We are now ready to state the exponential decay result.

\begin{theorem}\label{thm:decay}
  Under Assumption \ref{ass:main}, $\|K^\star_{ij}\|\leq \Upsilon\rho^{d_\cG(i,j)}$ for $i,j\in\cV$, where
  \begin{subequations}\label{eqn:decay-constants}
    \begin{align}
      \gamma_{\bF}:=
      & \frac{(1-\alpha)^2 }{L^2 (1+L)^2 },\quad
        \gamma_{\bG}:=
        \frac{(1-\alpha)^2 \gamma}{2L^4 (1+L)^2  },\quad
        L_{\bP}
        :=\frac{L^3 ( 1+L^2)}{1-\alpha^2},\\
      \overline{\mu}:= & (2L^2_{\bH}/\gamma_{\bG} + \gamma_{\bG}+  L_{\bH})/\gamma_{\bF} \\
      \gamma_{\bH}:=
      &\left(\frac{2}{\gamma_{\bG}} + \left(1+\frac{4L_{\bH}}{\gamma_{\bG}}+\frac{4L^2_{\bH}}{\gamma_{\bG}^2} \right)\frac{L_{\bH}(1+\overline{\mu}L_{\bH})}{\gamma_{\bF}} + \overline{\mu}\right)^{-1} \\
      L_{\bH}:=
      &\max(2L+1,L_{\bP}+1),\;
        \rho:=
        \left(\dfrac{L_{\bH}^2-\gamma_{\bH}^2}{L_{\bH}^2+\gamma_{\bH}^2}\right)^{1/2},\;
        \Upsilon:=
        L_{\bH}/\gamma_{\bH}^2 \rho.
    \end{align}
  \end{subequations}
  Furthermore, $\rho\in(0,1)$, and $\Upsilon\geq 1$.
\end{theorem}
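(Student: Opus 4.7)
The plan is to recast the infinite-horizon LQR as a graph-structured quadratic program and then invoke the exponential-decay sensitivity bound of \cite{shin2022exponential}. Since the stabilizing Riccati matrix $\bP^\star$ is the exact infinite-horizon cost-to-go, truncating at an arbitrary horizon $T$ with terminal cost $(1/2)\bx(T)^\top\bP^\star\bx(T)$ preserves the optimal time-$0$ feedback $\bK^\star$. The resulting finite-horizon QP is naturally indexed by the space-time graph on $\cV\times\mathbb{I}_{[0,T]}$ illustrated in Figure~\ref{fig:space-time-graph}, where $(i,t)$ is adjacent to $(i,t+1)$ and to $(j,t)$ for $j\in\cN_\cG[i]$; on the slice $t=0$ the induced graph distance from $(i,0)$ to $(j,0)$ coincides with $d_\cG(i,j)$.

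Next I would appeal to the main perturbation bound of \cite{shin2022exponential}: under uniform regularity of the QP (upper Hessian bound $L_\bH$, reduced-Hessian lower bound $\gamma_\bH$, constraint-Jacobian surjectivity $\gamma_\bF$, and multiplier bound $\overline\mu$), the partial derivative of a primal optimizer at one vertex with respect to a datum at another vertex decays exponentially in their graph distance, with rate and constant of the form in \eqref{eqn:decay-constants}. Perturbing the initial state $\obx$ in the coordinate block at node $j$ shifts $u_i^\star(0)$ by $-K^\star_{ij}$ times the perturbation, by linearity of the optimal feedback, so the bound directly yields $\|K^\star_{ij}\|\leq \Upsilon\rho^{d_\cG(i,j)}$ once the four regularity constants have been expressed in terms of $(L,\gamma,\alpha)$.

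The bulk of the work is this translation. The Hessian upper bound reduces to bounding $\bP^\star$: using any $(L,\alpha)$-stabilizing $\overline\bK$ as a feasible policy, the cost-to-go is dominated by a geometric series in $\alpha^2$ with per-term size $\lesssim L(1+L^2)\cdot L^2$, producing $L_\bP=L^3(1+L^2)/(1-\alpha^2)$ and hence $L_\bH=\max(2L+1,L_\bP+1)$. The Jacobian constant $\gamma_\bF$ comes from the block-bidiagonal structure of the discrete dynamics constraints together with $\|\bA\|,\|\bB\|\leq L$, which bounds the minimum singular value of the constraint Jacobian away from zero uniformly in $T$. The critical ingredient is the reduced-Hessian lower bound $\gamma_\bG$: starting from $\bR\succeq \gamma\bI$, I decompose any control trajectory as the $(L,\alpha)$-stabilizing feedback applied to the perturbed state plus a free perturbation, then telescope the state energy against the control energy along the stabilized dynamics to extract a uniform-in-$T$ coercivity constant of the claimed order $\gamma=(1-\alpha)^2\gamma/(2L^4(1+L)^2)$. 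The multiplier bound $\overline\mu$ then follows by solving the KKT costate equation with the help of $\gamma_\bF$, $L_\bH$, and $\gamma_\bG$. Substituting all four constants into the formulas of \cite{shin2022exponential} produces the displayed $\rho$ and $\Upsilon$; the positivity $\rho\in(0,1)$ is immediate from $\gamma_\bH>0$, and $\Upsilon\geq 1$ follows from $L_\bH\geq \gamma_\bH$ together with $\rho\geq \gamma_\bH/L_\bH$.

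The principal obstacle I expect is obtaining $\gamma_\bG$ uniformly in the horizon $T$: one must avoid any constant that grows with $T$ when bounding the accumulated state energy by the control energy along the stabilizing trajectory. This requires careful bookkeeping of the three ingredients $\|\overline\bK\|\leq L$, the decay rate $\alpha$, and the coercivity $\bR\succeq\gamma\bI$, and is what dictates the specific factors $(1-\alpha)^{-2}$, $L^{-4}$, $(1+L)^{-2}$ appearing in $\gamma_\bG$. A secondary technical point is ensuring that the regularity constants of \cite{shin2022exponential} for the space-time graph are the same for all $T$, so that taking $T\to\infty$ (which recovers the infinite-horizon $\bK^\star$) preserves the decay bound without loss.
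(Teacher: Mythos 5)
Your overall architecture matches the paper's exactly: replace the terminal cost with the DARE solution $\bP^\star$ so that the finite-horizon problem reproduces $\bK^\star$, view the KKT matrix as banded with respect to the space-time graph, and invoke the inverse-decay bound of \cite{shin2022exponential} after verifying uniform regularity. However, two of the three regularity constants are derived from the wrong hypotheses in your sketch, and in both cases the argument you propose would fail to be uniform in $T$. First, you attribute the constraint-Jacobian bound $\bF\bF^\top\succeq\gamma_{\bF}\bI$ to the bidiagonal structure plus $\|\bA\|,\|\bB\|\leq L$; but boundedness alone is not enough --- for an unstable $\bA$ the smallest singular value of $\bF$ decays exponentially in $T$ (take $\blambda(t+1)=\bA^{-\top}\blambda(t)$ when $\bA$ is expansive and $\bB$ is not helping). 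The paper obtains the bound by a column operation that inserts the $(L,\alpha)$-stabilizing gain $\obK$, which is why $\alpha$ appears in $\gamma_{\bF}=(1-\alpha)^2/L^2(1+L)^2$. Second, and more seriously, your route to the reduced-Hessian bound $\gamma_{\bG}$ never uses detectability of $(\bA,\bQ^{1/2})$, only $\bR\succeq\gamma\bI$ and the stabilizing feedback. That cannot work: on the null space of $\bF$ the state is determined by the control, and with $\bQ=\bzero$ and an unstable but stabilizable $\bA$ (e.g.\ scalar $\bA=2$, $\bB=1$) an impulse control gives $\|\bX\|^2\sim 4^T\|\bU\|^2$, so $\bX^\top\bG_1\bX+\bU^\top\bG_2\bU=\gamma\|\bU\|^2$ yields a coercivity constant that vanishes as $T\to\infty$. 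The telescoping you describe, writing $\bu=-\obK\bx+\bv$, gives $\|\bX\|\leq C(\|\bU\|+L\|\bX\|)$, which does not close unless $CL<1$. The paper instead lower-bounds $\|\bQ^{1/2}\bX\|^2+(\gamma/2L^2)\|\bF_1\bX\|^2$ by a multiple of $\|\bX\|^2$ using the detectability assumption (the $\gamma_{\bF}$ argument applied to $(\bA^\top,\bQ^{1/2})$); this is where the second factor of $(1-\alpha)^2$ and the extra powers of $L$ in $\gamma_{\bG}$ actually come from.

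A smaller point: the identification $d_{\cG_T}((0,i),(0,j))=d_\cG(i,j)$ is not automatic, because the terminal block $\bP^\star$ is dense and couples all nodes at time $T$; the paper must choose $T\geq\max_{i,j\in\cV}d_\cG(i,j)$ so that paths through the terminal layer cannot shortcut the spatial distance at stage $0$. Your sketch asserts the coincidence without this restriction.
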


The proof is in Appendix \ref{apx:decay}. The proof of Theorem \ref{thm:decay} relies on the exponential decay property of the inverse of graph-induced banded matrices (Appendix \ref{apx:prelim}). To apply this property, we derive an equivalent finite-horizon formulation for \eqref{eqn:lqr} by using the infinite-horizon cost-to-go function, which can be obtained by the solution of the discrete-time algebraic Riccati equation (DARE). An important property of the equivalent formulation is that it is {\it graph-structured}, in the sense that the sparsity structure of the Karush–Kuhn–Tucker (KKT) matrix is induced by the space-time graph (Figure \ref{fig:space-time-graph}, bottom). Then, we observe that the optimal gain $\bK^\star$ can be obtained as a submatrix of the inverse of the graph-induced banded KKT matrix, which we show satisfies the exponential decay property. In other words,  the block component of the inverse of the KKT matrix decays exponentially with respect to the distance on the space-time graph, and the decay rate depends on the singular values of the KKT matrix, which we show are uniformly upper and lower bounded by using Assumption \ref{ass:main}. This exponential decay property in the inverse of the KKT matrix directly leads to the exponential decay in the optimal gain and finishes the proof of Theorem \ref{thm:decay}.

{Based on Theorem \ref{thm:decay}, we now can show that the $\kappa$-truncated gain $\bK^\kappa$ is an exponentially accurate approximation of the optimal gain $\bK^\star$.} We additionally require the {subexponentially} growing graph assumption.

\begin{assumption}\label{ass:poly}
  There exists a {subexponential function} $p(\cdot)$ such that
  \begin{align*}
    \left|\left\{j\in\cV: d_\cG(i,j)=d\right\}\right|\leq p(d),\quad \forall i\in\cV.
  \end{align*}
\end{assumption}

Under Assumption \ref{ass:poly}, one can derive the following corollary of Theorem \ref{thm:decay}.

\begin{corollary}\label{cor:decay}
  Under Assumptions \ref{ass:main} and \ref{ass:poly}, $\Vert\bK^\star-\bK^\kappa\Vert \leq \Psi \delta^{\kappa}$, where
  \begin{align}\label{eqn:delta-phi}
    \delta&:=(\rho+1)/2,\quad \Psi:=\bigg(\sup_{d\in\mathbb{I}_{\geq 0}} p(d) (\rho/\delta)^d\bigg)\Upsilon \delta/(1-\delta),
  \end{align}
  and $\rho,\Upsilon$ are defined in \eqref{eqn:decay-constants}. Moreover, $\delta\in(0,1)$.
\end{corollary}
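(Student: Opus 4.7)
The block $(i,j)$ entry of $\bK^\star - \bK^\kappa$ equals $K^\star_{ij}$ when $d_\cG(i,j) > \kappa$ and $\bzero$ otherwise, so the plan is to bound the induced $2$-norm of this sparse block matrix by a Schur-type row/column sum estimate applied to the scalar matrix of block norms. Specifically, letting $\widehat{M}$ denote the scalar matrix with entries $\widehat{M}[i,j] := \|(\bK^\star-\bK^\kappa)[i,j]\|$, a standard block-wise Cauchy--Schwarz argument yields $\|\bK^\star - \bK^\kappa\| \leq \|\widehat{M}\|$, and then $\|\widehat{M}\| \leq \sqrt{\|\widehat{M}\|_1 \|\widehat{M}\|_\infty}$ reduces the task to estimating row and column sums of $\widehat{M}$.

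By Theorem \ref{thm:decay}, each nonzero entry satisfies $\widehat{M}[i,j] \leq \Upsilon \rho^{d_\cG(i,j)}$, so grouping terms by the common value $d = d_\cG(i,j)$ and applying Assumption \ref{ass:poly} (which, by symmetry of graph distance, also controls column sums) bounds both $\|\widehat{M}\|_1$ and $\|\widehat{M}\|_\infty$ by
\[
\Upsilon \sum_{d=\kappa+1}^{\infty} p(d)\, \rho^d.
\]
This reduces the problem to a scalar estimate that depends only on $\kappa$ and the decay rate $\rho$.

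To extract a clean $\delta^\kappa$ factor, I would split $\rho^d = (\rho/\delta)^d \cdot \delta^d$. This works because $\rho \in (0,1)$ (from Theorem \ref{thm:decay}) implies $\delta = (\rho+1)/2 \in (\rho,1) \subset (0,1)$ and hence $\rho/\delta \in (0,1)$. Since $p$ is subexponential, $p(d)(\rho/\delta)^d \to 0$ and is therefore bounded over $d \in \mathbb{I}_{\geq 0}$, so the constant $\sup_d p(d)(\rho/\delta)^d$ appearing in the definition of $\Psi$ is finite. Pulling this constant outside the sum and collapsing the geometric tail $\sum_{d>\kappa}\delta^d = \delta^{\kappa+1}/(1-\delta) = (\delta/(1-\delta))\,\delta^\kappa$ produces precisely the factor $\Psi\delta^\kappa$, and the assertion $\delta \in (0,1)$ follows immediately from $\rho \in (0,1)$.

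The only nontrivial point is the block-to-scalar norm reduction at the outset; once $\widehat{M}$ is in hand, the rest is a mechanical application of Theorem \ref{thm:decay}, Assumption \ref{ass:poly}, and the subexponential--vs--geometric cancellation $p(d)(\rho/\delta)^d \leq \textnormal{const}$.
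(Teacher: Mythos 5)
Your proposal is correct and follows essentially the same route as the paper: bound the row and column sums of the block norms via Theorem \ref{thm:decay} and Assumption \ref{ass:poly}, split $\rho^d = (\rho/\delta)^d\delta^d$ to isolate the geometric tail, and pass from row/column sums to the induced $2$-norm. The only cosmetic difference is that you re-derive the block-to-scalar norm reduction (Cauchy--Schwarz plus $\|\widehat{M}\|\leq\sqrt{\|\widehat{M}\|_1\|\widehat{M}\|_\infty}$) where the paper simply invokes Lemma \ref{lem:sqrt}, which packages exactly that estimate.
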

The proof is in Appendix \ref{apx:cor}. One can show that even if there are multiple nodes more than $\kappa$ hops away, their accumulated effect is still exponentially small in $\kappa$, since the exponential decay in magnitude (Theorem \ref{thm:decay}) is stronger than the {subexponential} increase in their number (Assumption \ref{ass:poly}). The supremum in \eqref{eqn:delta-phi} is bounded, since the product of a {subexponential function} and an exponentially decaying function converges.

The results in this section establish the fundamental basis for the stability and performance analysis in the next section. Intuitively, Theorem \ref{thm:decay} suggests that the far-ranging interactions are negligible, so neglecting them in the control policy does not significantly degrade the control performance. In addition, since we have the explicit bounds for the difference from the optimal policy (Corollary \ref{cor:decay}), the result can be directly applied to analyze the stability and performance in the next section. 

\begin{remark}
  Our exponential decay result is different from that in \cite{motee2008optimal,moteeauthors}, where the authors have aimed to establish various {\it spatial decay} conditions in an {\it asymptotic} sense. In the particular case of exponential decay, the authors aimed to establish that there exists $\rho\in(0,1)$ such that $\sum_{j\in\cV} \rho^{-d_{\cG}(i,j)}\|K^\star_{ij}\| < +\infty$, for all $i\in \cV$ (originally presented in \cite[Theorem 6]{motee2008optimal} and corrected in \cite[Modified Version of Theorem 6]{moteeauthors}), where $\cV$ is an infinite node set. A key limitation is that one cannot obtain the exponentially decaying upper bound in the full operator space; that is, $\|\bK^\star-\bK^\kappa\|$ cannot be bounded as in Corollary \ref{cor:decay}, which serves as a crucial intermediate result for the stability and performance analysis.  \hfill$\blacksquare$
\end{remark}
\begin{remark}\label{rmk:uniform}
  Since Theorem \ref{thm:decay} establishes exponential decay in the {\it distance} on the graph, the results can be complemented by showing the uniformity of the parameters $L,\gamma,\alpha$ in the system size $N$ (the number of nodes). Typically, additional conditions are necessary to guarantee such uniformity because otherwise these parameters may tend to infinity or zero as the size of the system grows. Thus, sufficient conditions for the uniformity of Assumption \ref{ass:main} are of interest. Addressing this issue is  important, but we postpone the discussion on this matter to Section \ref{sec:uniform} in order not to bury the main points under the technical details. \hfill$\blacksquare$
\end{remark}
\begin{remark}
  Note that for any given finite graph, one can find $p(\cdot)$ that satisfies Assumption \ref{ass:poly}. However, if we consider a {\it family of systems} (and associated graphs) and want to obtain the bounds $\Psi$ and $\delta$ that apply uniformly to each system, we need $p(\cdot)$ to be uniform as well, and obtaining such $p(\cdot)$ may be impossible for certain cases. For example, consider a family of systems obtained by subgraphs of an infinite regular tree. For these systems, the number of nodes in distance $d$ can grow exponentially with $d$; thus, we cannot find a uniform {subexponential function} $p(\cdot)$. One of the sufficient conditions for the existence of such a uniform $p(\cdot)$ is that the graphs are obtained as subgraphs of a mesh in a finite-dimensional space. For instance, if the parent graph is a {$1$-dimensional} mesh, the number of nodes in distance $d$ is always not greater than $2$. In a more general case of $D$-dimensional mesh, the number of nodes in distance $d$ is $O(d^{D-1})$; thus, one can always find $p(\cdot)$ satisfying Assumption \ref{ass:poly}. \hfill$\blacksquare$
\end{remark}
{
  \begin{remark}
    We now discuss the intuitive interpretation of Theorem \ref{thm:decay}. One possible misinterpretation is that the sparsity in \eqref{eqn:settings} causes the delay in the propagation of perturbations, and it causes exponential decay. This ignores that even if the sparsity in the system causes the delay in propagation, the initial condition of any node has effects on every agent's decision at time $0$ (assuming the graph is connected). This follows from the fact that the optimal decision from \eqref{eqn:lqr} takes into account the anticipated propagation of the perturbations from distant agents and takes proactive actions; mathematically this can be seen from the fact that the inverse of the KKT matrix, which maps changes in data to changes in the optimal control, is dense. While this proactive action can be arbitrarily large in principle, Theorem \ref{thm:decay} establishes that the magnitude of such optimal proactive action decays exponentially as long as Assumption \ref{ass:main} holds. This indicates that the exponential decay does not derive from the delayed propagation of perturbations but rather comes from the regularity of the problem (imposed by Assumption \ref{ass:main}), which naturally damps the magnitude of proactive control actions against the effects of distant agents. \hfill$\blacksquare$
  \end{remark}
  \begin{remark}
    An interesting open question is whether comparable results hold for continuous-domain (continuous-time or space) optimal control problems. The temporal exponential decay for continuous-domain problems was recently established \cite{grune2020exponential,grune2019sensitivity}, but the spatial counterpart has not been reported. Studying continuous-domain problems as limiting cases of discrete-domain problems is non-trivial because the uniform regularity does not hold when refining the discretization mesh size. We leave the analysis of continuous-domain problems as a topic of future work. \hfill$\blacksquare$
  \end{remark}
  \begin{remark}
    Since many practical control problems have nonlinearity or constraints, establishing comparable results for nonlinear, constrained setting is of interest. Generalizing our results to a nonlinear setting can be done by applying the classical perturbation analysis results for nonlinear programs and generalized equations \cite{dontchev2009implicit,robinson1980strongly,na2020exponential,shin2022exponential}, and analyzing constrained problems can be done via active set analysis \cite{xu2018exponentially,xu2019exponentially,shin2021diffusing}. However, the perturbation analysis for nonlinear, constrained problems is local in nature, and strong assumptions are necessary to obtain the perturbation bound over a neighborhood of interest. We leave the analysis of nonlinear, constrained settings as a topic of future work. \hfill$\blacksquare$
  \end{remark}
}

\section{Stability and Performance Analysis}\label{sec:main}
We now establish the exponential stability and the near optimality of $\kappa$-distributed control. Stability is a prerequisite for the performance analysis because if the system is unstable, the performance index trivially becomes $+\infty$. The following theorem establishes that under Assumptions \ref{ass:main} and \ref{ass:poly} and sufficiently large $\kappa$, the state transition matrix $\bPhi^\kappa$ induced by $\bpi^\kappa(\cdot)$ is exponentially stable.

\begin{theorem}\label{thm:stab}
  Under Assumptions \ref{ass:main} and \ref{ass:poly} and $\kappa\geq \okappa$, $\bPhi^\kappa$ is $(\Omega,\beta)$-stable, where \\[-.17in]
  \begin{subequations}\label{eqn:beta-Omgea-okappa}
    \begin{align}
      \beta&:=\left(1-(1-\rho^2)/2\Upsilon^2\right)^{1/2},\quad
             \Omega:=\left(\Upsilon^2/(1-\rho^2)\right)^{1/2},\\
      \okappa&:=\log\left(\dfrac{1-\rho^2}{2\Upsilon^2\Psi L (L\Psi+2L(1+L_{\bP}L^2/\gamma))}\right)/\log\delta ,
    \end{align}
  \end{subequations}
  and $\Upsilon$, $\rho$, $L_{\bP}$, $\delta$, and $\Psi$ are defined in \eqref{eqn:decay-constants} and \eqref{eqn:delta-phi}. Furthermore, $\beta\in(0,1)$.
\end{theorem}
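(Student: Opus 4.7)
The plan is to use a Lyapunov perturbation argument. First, I obtain a Lyapunov matrix certifying stability of the centralized closed-loop map $\bPhi^\star = \bA - \bB\bK^\star$; then, since Corollary~\ref{cor:decay} shows that $\bPhi^\kappa = \bPhi^\star - \bB(\bK^\kappa - \bK^\star)$ is a small perturbation of $\bPhi^\star$, the same Lyapunov matrix still certifies stability of $\bPhi^\kappa$ when $\kappa$ is sufficiently large, with slightly degraded constants $(\Omega,\beta)$.

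For the first ingredient, I would leverage the space-time graph reformulation underlying Theorem~\ref{thm:decay} to extract the temporal decay $\|(\bPhi^\star)^t\| \leq \Upsilon \rho^t$: the $(0,t)$-block of the inverse KKT matrix associated with the space-time problem is precisely the map from an initial-state perturbation to the closed-loop state at time $t$, so the same block-entry decay rate $\rho$ applies in the time direction. Define the Lyapunov matrix
\begin{align*}
\bM^\star := \sum_{t=0}^\infty (\bPhi^{\star\top})^t \bPhi^{\star t};
\end{align*}
then the series converges, $\bI \preceq \bM^\star \preceq \Omega^2 \bI$ with $\Omega^2 = \Upsilon^2/(1-\rho^2)$, and $\bM^\star$ satisfies the discrete Lyapunov identity $\bPhi^{\star\top}\bM^\star\bPhi^\star = \bM^\star - \bI$.

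For the perturbation step, set $\bE := -\bB(\bK^\kappa - \bK^\star)$ so that $\bPhi^\kappa = \bPhi^\star + \bE$, with $\|\bE\| \leq L\Psi\delta^\kappa$ by Corollary~\ref{cor:decay} and Assumption~\ref{ass:main}\ref{ass:bdd}. A DARE-based estimate using $\bR \succeq \gamma \bI$ and $\|\bP^\star\| \leq L_\bP$ gives $\|\bK^\star\| = \|(\bR + \bB^\top \bP^\star \bB)^{-1}\bB^\top \bP^\star \bA\| \leq L_\bP L^2/\gamma$, hence $\|\bPhi^\star\| \leq L(1 + L_\bP L^2/\gamma)$. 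Expanding
\begin{align*}
\bPhi^{\kappa\top}\bM^\star\bPhi^\kappa = \bM^\star - \bI + \bPhi^{\star\top}\bM^\star\bE + \bE^\top \bM^\star \bPhi^\star + \bE^\top \bM^\star \bE,
\end{align*}
the perturbation terms are dominated in the positive semidefinite ordering by $(2\|\bPhi^\star\| + \|\bE\|)\|\bM^\star\|\|\bE\|\,\bI$. Substituting the bounds above and using $\delta^\kappa \leq 1$ in one factor of $\|\bE\|$, the threshold $\okappa$ is exactly the smallest $\kappa$ for which this quantity is at most $\tfrac{1}{2}$. Hence for $\kappa \geq \okappa$, $\bPhi^{\kappa\top}\bM^\star\bPhi^\kappa \preceq \bM^\star - \tfrac{1}{2}\bI \preceq \bigl(1 - 1/(2\Omega^2)\bigr)\bM^\star = \beta^2 \bM^\star$, where we used $\bI \succeq \bM^\star/\Omega^2$. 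Iterating yields $(\bPhi^{\kappa\top})^t\bM^\star\bPhi^{\kappa t} \preceq \beta^{2t}\bM^\star$, and sandwiching by $\bI \preceq \bM^\star \preceq \Omega^2 \bI$ gives $\|\bPhi^{\kappa t}\bx\|^2 \leq \beta^{2t}\Omega^2\|\bx\|^2$, i.e., $\bPhi^\kappa$ is $(\Omega,\beta)$-stable with $\beta \in (0,1)$ since $\rho \in (0,1)$.

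The main obstacle is the temporal stability estimate for $\bPhi^\star$ with the same constants $\Upsilon, \rho$ from Theorem~\ref{thm:decay}. If the space-time analysis does not directly yield this with matching rate, one would either redo the block-entry decay argument tracking only the time-direction component, or bound $\|(\bPhi^\star)^t\|$ via an independent argument combining the DARE with the detectability assumption and then absorb the resulting constants into redefined versions of $\Omega$ and $\beta$. A second, more routine care point is verifying that the stated $\okappa$ is indeed the cleanest algebraic threshold that guarantees the $\tfrac{1}{2}$ cross-term bound, which amounts to a direct rearrangement of the inequality above.
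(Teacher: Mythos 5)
Your proposal is correct and follows essentially the same route as the paper: the paper's proof uses the identical Lyapunov function $V(\bx)=\bx^\top\bM^\star\bx$ with $\bM^\star=\sum_{t\ge 0}(\bPhi^{\star\top})^t(\bPhi^\star)^t$, the same sandwich $\bI\preceq\bM^\star\preceq(\Upsilon^2/(1-\rho^2))\bI$ obtained from the $(\Upsilon,\rho)$-stability of $\bPhi^\star$ (which the paper does establish, in Theorem~\ref{thm:stab-0}, exactly via the temporal-direction decay of the inverse KKT matrix that you anticipate), the same expansion of the cross terms with $\|\bE\|\le L\Psi\delta^\kappa$ and $\|\bK^\star\|\le L_{\bP}L^2/\gamma$, and the same $1/2$-threshold yielding $\okappa$. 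The constants you derive match \eqref{eqn:beta-Omgea-okappa} exactly.
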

The proof is presented in Appendix \ref{apx:stab}. The sketch of the proof is as follows. We first quantify the convergence rate of the centralized optimal policy $\bpi^\star(\cdot)$. This analysis reveals that the centralized controller has a uniformly bounded {\it stability margin} {(only dependent on $L,\alpha,\gamma$)}. Then, by using the fact that $\kappa$-distributed policy can become arbitrarily close to $\bpi^\star(\cdot)$ (Corollary \ref{cor:decay}), we show that by choosing a sufficiently large $\kappa$, the difference incurred by decentralization can be made sufficiently small to be tolerable by the stability margin. Therefore, $\kappa$-distributed control with sufficiently large $\kappa$ can remain stable.

Now we analyze the performance of $\bpi^\kappa(\cdot)$. To formally study the performance, we first define the {\it closed-loop} performance index for initial state $\obx$ and policy $\bpi(\cdot)$:
\begin{align*}
  J(\bpi(\cdot);\obx) := \sum^\infty_{t=0}(1/2)\bx(t;\obx)^\top\bQ\bx(t;\obx)+ (1/2)\bu(t;\obx)^\top\bR\bu(t;\obx),
\end{align*}
where $\{\bx(t;\obx),\bu(t;\obx)\}_{t\in\mathbb{I}_{\geq 0}}$ denotes the state and control trajectories of the system starting from $\obx$ and controlled by $\bpi(\cdot)$. In the next theorem, we show that the optimality gap of $\bpi^\kappa(\cdot)$ decays exponentially with respect to $\kappa$.
\begin{theorem}\label{thm:regret}
  Under Assumptions \ref{ass:main} and \ref{ass:poly} and $\kappa\geq \okappa$, we have $J(\bpi^\kappa(\cdot);\obx)- J(\bpi^\star(\cdot);\obx) \leq \Gamma \delta^\kappa\|\obx\|^2$, where
  \begin{align}
    \Gamma:=\Omega^2L\Psi\left((1+LL_{\bP})(2L_{\bP}L^2/\gamma + \Psi)+2LL_{\bP}\right)/(1-\beta^2),
  \end{align}
  and $L_{\bP},\delta,\Psi,\beta,\Omega,\okappa$ are defined in \eqref{eqn:decay-constants}, \eqref{eqn:delta-phi}, and \eqref{eqn:beta-Omgea-okappa}.
\end{theorem}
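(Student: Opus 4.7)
The plan is to reduce the question to a discrete Lyapunov analysis in the variable $\bV^\kappa-\bP$, where $\bP$ is the DARE solution and $\bV^\kappa$ is the closed-loop cost-to-go of $\bpi^\kappa$. Standard LQR theory gives $J(\bpi^\star(\cdot);\obx)=\tfrac12\obx^\top\bP\obx$ together with the uniform bound $\|\bP\|\leq L_\bP$ (established in the proof of Theorem~\ref{thm:decay}). Because Theorem~\ref{thm:stab} ensures $(\Omega,\beta)$-stability of $\bPhi^\kappa$ for $\kappa\geq\okappa$, the closed-loop cost under $\bpi^\kappa$ is also finite and quadratic, $J(\bpi^\kappa(\cdot);\obx)=\tfrac12\obx^\top\bV^\kappa\obx$, where $\bV^\kappa$ uniquely solves $\bV^\kappa=\bPhi^{\kappa\top}\bV^\kappa\bPhi^\kappa+\bQ+\bK^{\kappa\top}\bR\bK^\kappa$. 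Thus it suffices to bound $\|\bV^\kappa-\bP\|$.

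To derive a Lyapunov equation for $\bV^\kappa-\bP$, I would subtract the analogous identity $\bP=\bPhi^{\star\top}\bP\bPhi^\star+\bQ+\bK^{\star\top}\bR\bK^\star$, insert $\pm\bPhi^{\kappa\top}\bP\bPhi^\kappa$, expand $\bPhi^\kappa=\bPhi^\star-\bB\bE$ with $\bE:=\bK^\kappa-\bK^\star$, and invoke the Riccati first-order optimality identity $\bR\bK^\star=\bB^\top\bP\bPhi^\star$ to cancel the cross terms. This yields
\begin{align*}
  \bV^\kappa-\bP=\bPhi^{\kappa\top}(\bV^\kappa-\bP)\bPhi^\kappa+\bE^\top(\bR+\bB^\top\bP\bB)\bE.
\end{align*}
By the $(\Omega,\beta)$-stability of $\bPhi^\kappa$, this admits the convergent series solution $\bV^\kappa-\bP=\sum_{t=0}^\infty (\bPhi^{\kappa,t})^\top\bE^\top\bM\bE\,\bPhi^{\kappa,t}$, where $\bM:=\bR+\bB^\top\bP\bB$, and summing $\sum_{t\geq 0}\Omega^2\beta^{2t}=\Omega^2/(1-\beta^2)$ gives $\|\bV^\kappa-\bP\|\leq \Omega^2/(1-\beta^2)\cdot\|\bE^\top\bM\bE\|$.

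The main obstacle is to control $\|\bE^\top\bM\bE\|$ so that its $\kappa$-dependence carries only a single factor of $\Psi\delta^\kappa$, matching the form of $\Gamma$. The naive submultiplicative bound $\|\bE\|^2\|\bM\|$ produces $\Psi^2\delta^{2\kappa}$, which decays faster in $\kappa$ but does not match the stated $\Gamma$. The fix is the asymmetric decomposition $\bE^\top\bM\bE=\bE^\top\bM\bK^\kappa-\bE^\top\bM\bK^\star$: Corollary~\ref{cor:decay} is applied to only one copy of $\bE$ (supplying a single $\Psi\delta^\kappa$), while the remaining factor is absorbed into $\bK^\kappa$ or $\bK^\star$, whose norms admit the a priori bounds $\|\bK^\star\|\leq L^2L_\bP/\gamma$ (from $\bK^\star=(\bR+\bB^\top\bP\bB)^{-1}\bB^\top\bP\bA$ together with $\bR\succeq\gamma\bI$, $\|\bP\|\leq L_\bP$, and $\|\bA\|,\|\bB\|\leq L$) and $\|\bK^\kappa\|\leq L^2L_\bP/\gamma+\Psi$ by the triangle inequality. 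Combined with $\|\bM\|\leq L(1+LL_\bP)$, this yields a bound of the form $\|\bV^\kappa-\bP\|\leq \Omega^2L\Psi(1+LL_\bP)(2L^2L_\bP/\gamma+\Psi)/(1-\beta^2)\cdot\delta^\kappa$, which, upon multiplication by $\tfrac12\|\obx\|^2$, is dominated by $\Gamma\delta^\kappa\|\obx\|^2$; the additional $2LL_\bP$ slack inside $\Gamma$ is absorbed in a slightly looser bookkeeping of one of the residual terms in the decomposition.
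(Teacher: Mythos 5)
Your proposal is correct, and it takes a genuinely different route from the paper. The paper telescopes the Bellman recursions of the two policies along the trajectory generated by $\bpi^\kappa$: it bounds a stagewise control-cost difference $\Delta_1(\bx):=\bpi^\kappa(\bx)^\top\bR\bpi^\kappa(\bx)-\bpi^\star(\bx)^\top\bR\bpi^\star(\bx)$ and a stagewise cost-to-go difference $\Delta_2(\bx):=J^\star(\bphi^\kappa(\bx))-J^\star(\bphi^\star(\bx))$, each factored as $(a+b)^\top M(a-b)$ so that Corollary~\ref{cor:decay} supplies one $\Psi\delta^\kappa$, and then sums over the trajectory using $(\Omega,\beta)$-stability; the two sums are exactly the $(1+LL_{\bP})(2L_{\bP}L^2/\gamma+\Psi)$ and $2LL_{\bP}$ pieces of $\Gamma$. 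You instead collapse both stagewise terms into the single residual $\bE^\top(\bR+\bB^\top\bP\bB)\bE$ via the Lyapunov equation for $\bV^\kappa-\bP$, which requires the Riccati stationarity identity $\bR\bK^\star=\bB^\top\bP\bPhi^\star$ --- a cancellation the paper never exploits here. Your route is cleaner and in fact stronger: the symmetric estimate $\|\bE\|^2\|\bM\|$ already yields an $O(\Psi^2\delta^{2\kappa})$ gap, reflecting that the LQR cost is quadratic around the optimal gain, and since $\Psi\delta^{2\kappa}\leq\Psi\delta^{\kappa}$ and $\Psi\leq 2L_{\bP}L^2/\gamma+\Psi$ this already implies the stated bound without the asymmetric split (your split is also valid and gives the same conclusion). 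The only caveats are cosmetic: your final constant is smaller than $\Gamma$ (by the factor $1/2$ and the missing $+2LL_{\bP}$), so you prove the theorem a fortiori rather than reproducing $\Gamma$ exactly, and the closing remark about ``absorbing the $2LL_{\bP}$ slack'' should simply say that your bound is dominated by $\Gamma\delta^\kappa\|\obx\|^2$.
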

The proof is in Appendix \ref{apx:regret}. The proof is built on Corollary \ref{cor:decay} and Theorem \ref{thm:stab}. Specifically, we first show that the stagewise performance loss is $O(\|\bx(t)\|^2\delta^\kappa)$;  we then show that even if we sum up these terms over the infinite horizon, we still have $O(\|\obx\|^2\delta^\kappa)$ performance loss, due to the exponential stability. 

Theorem \ref{thm:stab} says that the performance loss incurred by decentralization decays exponentially in $\kappa$. Therefore, by increasing $\kappa$, one can make the closed-loop performance of the $\kappa$-distributed controller arbitrarily close to that of the centralized optimal controller. Thus, the $\kappa$-distributed control is near-optimal in the sense that its performance can become arbitrarily close to that of the centralized control by choosing a sufficiently large $\kappa$. However, the near-optimality comes at a trade-off in increased computation (must perform matrix-vector multiplications with larger sizes) and communication (state information must be shared with the larger neighborhood). The system designer should tune $\kappa$ to balance the control performance and computation/communication loads. {Note that it is sufficient for $\kappa$ to be $O(\log\epsilon)$ to achieve $\epsilon$-performance loss. This implies that one can significantly reduce the optimality gap by increasing $\kappa$ by a moderate factor. With this relationship, it is likely that one can achieve desirable performance with moderate $\kappa$.}

The bounds in Theorems \ref{thm:decay}, \ref{thm:stab}, and \ref{thm:regret} may tend to be  conservative compared with the actual decay rates. Nevertheless, those explicit bounds provide insights into the behavior of $\kappa$-distributed LQR. First, the bounds reveal under which conditions the decay rates $\rho,\beta,\delta$ become close to $1$ (slow). For example, it is clear from their definitions that they approach $1$ as $\alpha\rightarrow 1$, since $\alpha\rightarrow 1 \implies\gamma_{\bF}\rightarrow0 \implies\gamma_{\bH}\rightarrow 0 \implies \rho,\beta,\delta\rightarrow 1$. Thus, we can anticipate that if the stabilizability and detectability are close to being violated, the decay rates in Theorems \ref{thm:decay}, \ref{thm:stab}, and \ref{thm:regret} become slow. Second, if $L,\gamma,\alpha,p(\cdot)$ are independent of the system size $N$, the constants in Theorems \ref{thm:decay}, \ref{thm:stab}, and \ref{thm:regret} are also independent of the system size. In other words, the results in Theorems \ref{thm:decay}, \ref{thm:stab}, and \ref{thm:regret} will hold for arbitrarily large problems.

We now revisit Examples \ref{eg:hvac} and \ref{eg:freq} and discuss the validity of the assumptions.

{\it Example} \ref{eg:hvac} (revisited). The HVAC system model can be expressed by
\begin{align*}
  \begin{bmatrixfn}
    \bU(t+1)\\
    \bT(t+1)
  \end{bmatrixfn}=
  \begin{bmatrixfn}
    \bI&(\Delta t)\bI\\
    \bzero &\bI - (\Delta t)\bL\\
  \end{bmatrixfn}
  \begin{bmatrixfn}
    \bU(t)\\
    \bT(t)
  \end{bmatrixfn} +
  \begin{bmatrixfn}
    \bzero\\
    \eta_1(\Delta t)\bI
  \end{bmatrixfn}
  \bu(t),\quad
  \bQ=
  \begin{bmatrixfn}
    \eta_2^2\bI & \bzero\\
    \bzero &\eta_3^2\bI
  \end{bmatrixfn},\quad
             \bR=\bI,
\end{align*}
where $\bL=\{k_{ij}\}_{i\in\cV,j\in\cV}$ is the weighted graph Laplacian. Note that increasing $N$ expands the domain rather than refining the spatial discretization. One can see that the system matrices are uniformly upper bounded and $\bR$ is uniformly lower bounded.
{
  Further, $(\bA,\bB)$ is stabilizable and $(\bA,\bQ^{1/2})$ is detectable with uniform constants, since one can make $\bA-\bB\bK$ and $\bA-\bK'\bQ^{1/2}$ nilpotent (thus their spectral radius is $0$) by choosing
  \begin{align*}
    \bK&:=
    \begin{bmatrixfn}
      \frac{1}{\eta_1(\Delta t)^2}\bI & (2\bI-(\Delta t)\bL )/\eta_1(\Delta t)\\
    \end{bmatrixfn},\;
    \bK':=
    \begin{bmatrixfn}
      (1/|\eta_2|) \bI + (\bI-(\Delta t)\bL)/|\eta_2| & \bzero\\
      (1/(\Delta t)|\eta_2|)(\bI-(\Delta t)\bL)^2 &\bzero\\
    \end{bmatrixfn}.
  \end{align*}
  That is, it is sufficient for $\eta_1,\eta_2\neq 0$ to satisfy Assumption \ref{ass:main}. \hfill$\blacksquare$
}

{\it Example} \ref{eg:freq} (revisited). The frequency control problem can be analyzed in a similar manner. The model can be expressed by
  \begin{align*}
    \begin{bmatrixfn}
      \btheta(t+1)\\
      \bomega(t+1)
    \end{bmatrixfn}=
    \begin{bmatrixfn}
      \bI&(\Delta t)\bI\\
      - (\Delta t)\bL&\bI \\
    \end{bmatrixfn}
    \begin{bmatrixfn}
      \btheta(t)\\
      \bomega(t)
    \end{bmatrixfn} +
    \begin{bmatrixfn}
      \bzero\\
      \eta_1(\Delta t)\bI
    \end{bmatrixfn}
    \bu(t),\quad
    \bQ=
    \begin{bmatrixfn}
      \eta_2^2\bI& \bzero\\
      \bzero&\eta_3^2\bI
    \end{bmatrixfn},\quad
        \bR=\bI,
  \end{align*}
  where $\bL=\{k_{ij}\}_{i\in\cV,j\in\cV}$ is the weighted graph Laplacian.
  {
    Similarly to Example \ref{eg:hvac}, the system matrices are uniformly upper bounded and $\bR$ is uniformly lower bounded.
  Moreover, $(\bA,\bB)$ is stabilizable and $(\bA,\bQ^{1/2})$ is detectable with uniform constants, since one can make $\bA-\bB\bK$ and $\bA-\bK'\bQ^{1/2}$ nilpotent by choosing
  \begin{align*}
    \bK&:=
    \begin{bmatrixfn}
      \frac{1}{\eta_1(\Delta t)^2}\bI  - \frac{1}{\eta_1}\bL& \frac{2}{\eta_1(\Delta t)}\bI\\
    \end{bmatrixfn},\;
    \bK':=
    \begin{bmatrixfn}
      (2/|\eta_2|) \bI &\bzero\\
      (1/(\Delta t)|\eta_2|) \bI - ((\Delta t)/|\eta_2|)\bL&\bzero\\
    \end{bmatrixfn}.
  \end{align*}
  That is, it is sufficient for $\eta_1,\eta_2\neq 0$ to satisfy Assumption \ref{ass:main}. \hfill$\blacksquare$
}
\section{Sufficient Conditions for Uniform Stabilizability and Detectability}\label{sec:uniform}
As discussed in Remark \ref{rmk:uniform}, sufficient conditions for the uniformity of Assumption \ref{ass:main} are of interest. While ensuring the uniformity for the boundedness (Assumption \ref{ass:main}\ref{ass:bdd}) and positive definiteness of $\bR$ (Assumption \ref{ass:main}\ref{ass:cvx}) is straightforward, {ensuring it for} {$(\bA,\bB)$-stabilizability} (Assumption \ref{ass:main}\ref{ass:stab}) and {$(\bA,\bQ^{1/2})$-detectability} (Assumption \ref{ass:main}\ref{ass:dect}) is nontrivial. We first revisit Examples \ref{eg:hvac} and \ref{eg:freq} and see under which conditions the uniformity of stabilizability and detectability may be violated. 

{{\it Example} \ref{eg:hvac} (revisited).} Consider a modification of the model in Example \ref{eg:hvac}, where the actuators are installed only in a subset $\cV_c$ of nodes:
\begin{align}\label{eqn:hvac-mod}
  \dot{T}_i = - \sum_{j\in\cN_{\cG}[i]} k_{ij}(T_i-T_j)  +
  \left\{
  \eta_1 u_i\text{ if }i\in\cV_c,\;
  0\text{ otherwise}
  \right\}
  .
\end{align}
One can observe that the analysis in Section \ref{sec:main} does not apply anymore, and it is nontrivial to see under which conditions Assumption \ref{ass:main}\ref{ass:stab} is satisfied. \hfill$\blacksquare$

{{\it Example} \ref{eg:freq} (revisited).} A similar situation may arise in the detectability side. Consider a modification of the performance index in Example \ref{eg:freq}, where only the states in a subset $\cV_o$ of nodes are observed by the performance index:
\begin{align}\label{eqn:freq-mod}
  \ell_i(\theta_i,\omega_i,u_i)=\left\{\eta_2^2\theta^2_i \text{ if }i\in\cV_o,\;0\text{ otherwise}\right\} + u^2_i.
\end{align}
Similarly, the satisfaction of Assumption \ref{ass:main}\ref{ass:dect} is not immediately clear. \hfill$\blacksquare$

As can be seen in these two examples, the satisfaction of the uniform stabilizability and detectability may not be immediately clear. In many practical applications, the network is often underactuated or undersensed. Thus, one must study sufficient conditions for uniform stabilizability and detectability.

We now discuss sufficient conditions for Assumption \ref{ass:main}. {To facilitate the discussion, we introduce some new notation: $\bx_{\cV'}:=\{x_i\}_{i\in\cV'}$; $\bu_{\cV'}:=\{u_i\}_{i\in\cV'}$; $ \bA_{\cV',\cV''}:= \{A_{ij}\}_{i\in\cV',j\in\cV''}$; $ \bB_{\cV',\cV''}:= \{B_{ij}\}_{i\in\cV',j\in\cV''}$; $ \bQ_{\cV',\cV''}:= \{Q_{ij}\}_{i\in\cV',j\in\cV''}$; $ \bR_{\cV',\cV''}:= \{R_{ij}\}_{i\in\cV',j\in\cV''}$, where $\cV',\cV''\subseteq \cV$.}
\begin{assumption}\label{ass:uniform}
  $\exists L_0>1$, $\gamma_0,\alpha_0\in(0,1)$, and a partition $\{\cV_k\}_{k\in\cK}$ of $\cV$ such that
  \begin{enumerate}[label=(\alph*)]
  \item\label{ass:uniform-blkdiag} {$\forall k,k'(\neq k)\in\cK$, $\bQ_{\cV_k,\cV_{k'}}=\bzero$, $\bR_{\cV_k,\cV_{k'}}=\bzero$, $\bB_{\cV_k,\cV_{k'}}=\bzero$.}
  \item\label{ass:uniform-bdd} {$\forall k,k'\in\cK$, $\|\bQ_{\cV_k,\cV_{k'}}\|,\|\bR_{\cV_k,\cV_{k'}}\|,\|\bA_{\cV_k,\cV_{k'}}\|,\|\bB_{\cV_k,\cV_{k'}}\|\leq L_0$.}
  \item\label{ass:uniform-cvx} {$\forall k\in\cK$, $\bR_{\cV_k,\cV_k}\succeq \gamma_0 \bI$.}
  \item\label{ass:uniform-stab} $\forall k,k'(\neq k)\in\cK$, $(\bA_{\cV_k,\cV_k},\bB_{\cV_k,\cV_k})$ is $(L_0,\alpha_0)$-stabilizable, and $\exists \obK_{\cV_k,\cV_{k'}}$ such that $\|\obK_{\cV_k,\cV_{k'}}\|\leq L_0$ and $\bA_{\cV_k,\cV_{k'}}=\bB_{\cV_k,\cV_k}\obK_{\cV_k,\cV_{k'}}$.
  \item\label{ass:uniform-dect} $\forall k,k'(\neq k)\in\cK$, $\bQ_{\cV_k,\cV_k}\succeq \bzero$, $(\bA_{\cV_k,\cV_k},\bQ^{1/2}_{\cV_k,\cV_k})$ is $(L_0,\alpha_0)$-detectable, and $\exists \obK'_{\cV_{k'},\cV_k}$ such that $\|\obK'_{\cV_{k'},\cV_k}\|\leq L_0$ and $\bA_{\cV_{k'},\cV_k}=\obK'_{\cV_{k'},\cV_k} \bQ^{1/2}_{\cV_k,\cV_k}$.
  \end{enumerate}
\end{assumption}
{Assumption \ref{ass:uniform}\ref{ass:uniform-blkdiag} requires that the system can be decomposed into blocks that do not have shared objective or actuators; that is, the coupling between the blocks is only made through dynamic mapping $\bA$. Assumptions \ref{ass:uniform}\ref{ass:uniform-bdd} and \ref{ass:uniform}\ref{ass:uniform-cvx} require that the blocks have bounded system matrices and positive definite $\bR$ blocks.}
Assumption \ref{ass:uniform}\ref{ass:uniform-stab} requires that the system can be partitioned into uniformly stabilizable blocks and that the effect of adjacent blocks can be rejected in one step. In particular, the effect of $\bx_{\cV_{k'}}$ on the dynamics of system $k$ can be captured by $\bA_{\cV_k,\cV_{k'}}\bx_{\cV_{k'}}$, which can be canceled out by adding a $\overline{\bK}_{\cV_k,\cV_{k'}} \bx_{\cV_{k'}}$ term to $\bu_{\cV_k}$. {In order for this to be true, $\bA$ should exhibit sparse connections between the blocks induced by $\{\cV_{k}\}_{k\in\cV}$.} Similarly, Assumption \ref{ass:uniform}\ref{ass:uniform-dect} requires that the system can be partitioned into uniformly detectable blocks and that the effect of the adjacent blocks can be filtered in one step. We also state an additional assumption on the topology of the partition. 

\begin{assumption}\label{ass:degree}
  {Given the partition} $\{\cV_k\}_{k\in\cK}$ of $\cV$, $\exists D\in\mathbb{I}_{\geq 0}$ such that
  \begin{align*}
    |\{k'\in\cK:\exists \{i,j\}\in\cE \text{ such that }i\in\cV_k,j\in\cV_{k'}\}|\leq D,\quad  \forall k\in\cK.
  \end{align*}
\end{assumption}
This assumption says that the blocks in the partition $\{\cV_k\}_{k\in\cK}$ have a bounded number of neighboring blocks. We are now ready to state the main theorem of Section \ref{sec:uniform}.
\begin{theorem}\label{thm:uniform}
  Under $\alpha:=\alpha_0$, $\gamma:=\gamma_0$, and $L:=L_0 D$, the following holds.
  \begin{enumerate}[label=(\alph*)]
  \item\label{thm:uniform-bdd} Assumptions \ref{ass:uniform}\ref{ass:uniform-blkdiag}, \ref{ass:uniform}\ref{ass:uniform-bdd}, and \ref{ass:degree} imply Assumption \ref{ass:main}\ref{ass:bdd}.
  \item\label{thm:uniform-cvx} Assumptions \ref{ass:uniform}\ref{ass:uniform-blkdiag} and \ref{ass:uniform}\ref{ass:uniform-cvx} imply Assumption \ref{ass:main}\ref{ass:cvx}.
  \item\label{thm:uniform-stab} Assumptions \ref{ass:uniform}\ref{ass:uniform-blkdiag}, \ref{ass:uniform}\ref{ass:uniform-bdd}, \ref{ass:uniform}\ref{ass:uniform-stab}, and \ref{ass:degree} imply Assumption \ref{ass:main}\ref{ass:stab}.
  \item\label{thm:uniform-dect} Assumptions \ref{ass:uniform}\ref{ass:uniform-blkdiag}, \ref{ass:uniform}\ref{ass:uniform-bdd}, \ref{ass:uniform}\ref{ass:uniform-dect}, and \ref{ass:degree} imply Assumption \ref{ass:main}\ref{ass:dect}.
  \end{enumerate}
  Therefore, under Assumptions \ref{ass:poly}, \ref{ass:uniform}, and \ref{ass:degree}, the results of Theorems \ref{thm:decay}, \ref{thm:stab}, and \ref{thm:regret} and Corollary \ref{cor:decay} hold.
\end{theorem}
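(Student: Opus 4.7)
The plan is to prove each of the four implications (a)--(d) separately and then conclude: parts (a)--(d) together give Assumption \ref{ass:main} with $L = L_0 D$, $\gamma = \gamma_0$, $\alpha = \alpha_0$, so Theorems \ref{thm:decay}, \ref{thm:stab}, \ref{thm:regret} and Corollary \ref{cor:decay} apply verbatim. The workhorse is a block-matrix norm estimate that I would record first: if $\bM$ is partitioned according to $\{\cV_k\}_{k\in\cK}$ with each nonzero block of operator norm at most $c$ and with at most $D$ nonzero blocks per block-row and per block-column, then $\|\bM\| \leq cD$ (two applications of Cauchy--Schwarz on the block action).

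Parts (a) and (b) reduce to one-line arguments. For (a) I apply the lemma to $\bA$ and $\bB$ with $c = L_0$ and $D$ from Assumption \ref{ass:degree}, while for $\bQ$ and $\bR$ the block-diagonal structure in Assumption \ref{ass:uniform}\ref{ass:uniform-blkdiag} lets me use the case $D=1$, giving $\|\bA\|,\|\bB\|,\|\bQ\|,\|\bR\| \leq L_0 D = L$. Part (b) is immediate from $\bR$ being block-diagonal with diagonal blocks $\succeq \gamma_0 \bI$, hence $\bR \succeq \gamma_0 \bI = \gamma \bI$.

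For part (c), I would construct the global feedback $\bK$ block by block: set $\bK_{\cV_k,\cV_k}$ equal to the local $(L_0,\alpha_0)$-stabilizing gain from Assumption \ref{ass:uniform}\ref{ass:uniform-stab}, and for each neighboring $k' \neq k$ set $\bK_{\cV_k,\cV_{k'}} := \obK_{\cV_k,\cV_{k'}}$ from the same assumption; all remaining blocks are zero. Because $\bB$ is block-diagonal (Assumption \ref{ass:uniform}\ref{ass:uniform-blkdiag}), the off-diagonal block entries of $\bB\bK$ satisfy $(\bB\bK)_{\cV_k,\cV_{k'}} = \bB_{\cV_k,\cV_k}\obK_{\cV_k,\cV_{k'}} = \bA_{\cV_k,\cV_{k'}}$, so $\bA - \bB\bK$ is block-diagonal with diagonal blocks $\bA_{\cV_k,\cV_k} - \bB_{\cV_k,\cV_k}\bK_{\cV_k,\cV_k}$. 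Since powers of a block-diagonal matrix stay block-diagonal, $\|(\bA-\bB\bK)^t\| = \max_k\|(\bA_{\cV_k,\cV_k}-\bB_{\cV_k,\cV_k}\bK_{\cV_k,\cV_k})^t\| \leq L_0 \alpha_0^t \leq L \alpha^t$, and the block-matrix lemma gives $\|\bK\| \leq L_0 D = L$.

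Part (d) is the dual of (c): since $\bQ$ is block-diagonal, so is $\bQ^{1/2}$, and setting $(\bK')_{\cV_k,\cV_k}$ to the local detectability gain and $(\bK')_{\cV_{k'},\cV_k} := \obK'_{\cV_{k'},\cV_k}$ for neighbors makes $\bA - \bK'\bQ^{1/2}$ block-diagonal via the cancellation $\obK'_{\cV_{k'},\cV_k}\bQ^{1/2}_{\cV_k,\cV_k} = \bA_{\cV_{k'},\cV_k}$ from Assumption \ref{ass:uniform}\ref{ass:uniform-dect}; the same block-wise stability estimate and norm bound conclude the argument. The only genuine subtlety in the whole proof is this block-diagonalization step in parts (c) and (d), which relies crucially on the block-diagonality of $\bB$ (respectively $\bQ$) from Assumption \ref{ass:uniform}\ref{ass:uniform-blkdiag}; once that is in place, the rest is routine bookkeeping, and the concluding statement then follows because parts (a)--(d) supply all four clauses of Assumption \ref{ass:main} with the stated constants.
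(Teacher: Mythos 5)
Your proposal is correct and follows essentially the same route as the paper: the same block-norm bound (the paper's Lemma~\ref{lem:sqrt} plays the role of your block-matrix lemma), the same construction of the global gain $\obK$ from the local stabilizing gains plus the coupling-rejection gains $\obK_{\cV_k,\cV_{k'}}$, the same observation that $\bA-\bB\obK$ becomes block-diagonal so stability is inherited blockwise, and the same duality argument for detectability (which the paper states in one line and you spell out explicitly). The only cosmetic difference is that the paper bounds $\|\bB\|\leq L_0$ via block-diagonality whereas you settle for $\|\bB\|\leq L_0 D$; both satisfy the required bound $L=L_0D$.
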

The proof is in Appendix \ref{apx:uniform}. The proof follows from the simple observation that if the effect of coupling can be rejected via the feedback/filter, then the decoupled subsystems are uniformly stabilizable/detectable.

Theorem \ref{thm:uniform} implies that the conditions in Assumption \ref{ass:main} can be made uniform by making comparable uniform assumptions that apply to {\it each block of nodes}. For example, instead of assuming the boundedness of the whole system, Assumption \ref{ass:uniform}\ref{ass:uniform-bdd} makes a boundedness assumption for each block. Similarly, rather than assuming that the whole system is stabilizable, we require that each block be stabilizable and capable of rejecting the coupling in one step.

We now discuss under which conditions Assumption \ref{ass:uniform} holds. Consider the underactuated HVAC system in \eqref{eqn:hvac-mod} and assume that the system is {\it assembled} by connecting uniformly stabilizable clusters of zones and that the connections are {\it made only between controlled zones} $\cV_c$ {(recall that $\cV_c$ is the set of nodes with actuators \eqref{eqn:hvac-mod})}. Then, one can see that each cluster remains uniformly stabilizable, and the effects of coupling between the clusters can be rejected by the actuators at the point of coupling.  Consequently, the assembled system is uniformly stabilizable. Similarly, for the undersensed power system in \eqref{eqn:freq-mod}, as long as the system is assembled by uniformly detectable clusters of buses and the connections are established only via the observed buses, the system remains uniformly detectable. From this observation, we can derive a design principle for the large-scale system that facilitates decentralization: The system must be decomposable to stabilizable/detectable blocks, and each block must have the ability to reject/filter the effect of coupling. 

\section{Numerical Results}\label{sec:num}
In this section we demonstrate the theoretical developments in Corollary \ref{cor:decay} and Theorem \ref{thm:regret} with Examples \ref{eg:hvac} and \ref{eg:freq}. For Example \ref{eg:hvac}, we assume that the graph is a $10\times 10$ mesh, $\Delta t= 1$s, $k_{ij} = 0.05$. For Example \ref{eg:freq}, we use the graph topology and susceptance data in the IEEE 118-bus case. We set $\Delta t= 5\times 10^{-6}$s, $k_{ij}=B_{ij}V_{\text{ref}}/M^2$, where $B_{ij}$ is the line susceptance, $M=10^{-5}\text{kg}\text{m}^2$, and $V_{\text{ref}}=132$kV. We use the original models in Examples \ref{eg:hvac} and \ref{eg:freq}, where the actuators are installed in every node and all the states are observed via the performance index. We set $\eta_1=\eta_2=\eta$ and $\eta_3=0$, where $\eta$ will be varied. The results can be reproduced with the source code available at \url{https://github.com/sshin23/near-optimal-distributed-lqr}.

We compare the relative truncation error $\|\bK^\kappa-\bK^\star\| / \|\bK^\star\|$ and the relative optimality gap $\|\bP^\kappa-\bP^\star\|/\|\bP^\star\|$. Here $\bP^*$ ($*=\kappa$ or $\star$) is the solution of the discrete Lyapunov equation: $(\bPhi^*)^\top  \bP \bPhi^* - \bP +  \bQ + (\bK^*)^\top \bR\bK^* = \bzero$. Note that $(1/2)\obx^\top \bP^*\obx$  is the cost-to-go function associated with gain $\bK^*$. Accordingly, we have that $\|\bP^\star-\bP^\kappa\| = 2\max\{ J(\bpi^\kappa(\cdot);\bx) - J(\bpi^\star(\cdot);\bx):\|\bx\|\leq 1\}$. Therefore, $\|\bP^\kappa-\bP^\star\|/\|\bP^\star\|$ represents the worst-case relative optimality gap. Throughout the case study, we vary two parameters: $\eta$ and $\kappa$. First, as $\kappa$ becomes large, we expect the relative truncation error and relative optimality gap to approach  $0$ (Corollary \ref{cor:decay} and Theorem \ref{thm:stab}). Furthermore, since the system becomes closer to violating the stabilizability and detectability conditions as $\eta\rightarrow 0$, we expect that the decay of the relative truncation error and optimality gap becomes more pronounced when $\eta$ is large. From the results in Figure \ref{fig:num}, we can confirm that the relative truncation error and optimality gap decay exponentially with $\kappa$ except for high $\kappa$ and $\eta$ cases, where finite numerical precision might have caused issues. Furthermore, the decay rate becomes faster as $\eta$ increases, as expected. Therefore, our numerical results verify our theoretical findings.

\begin{figure}[t!]
  \centering
  \includegraphics[width=.49\textwidth]{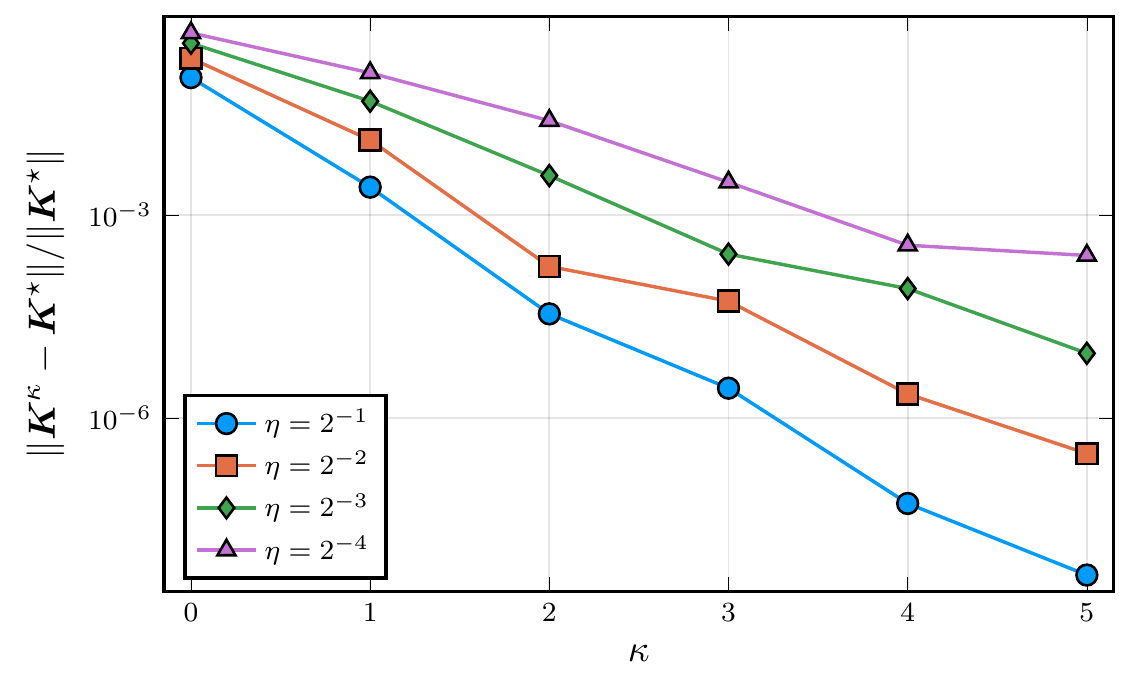}
  \includegraphics[width=.49\textwidth]{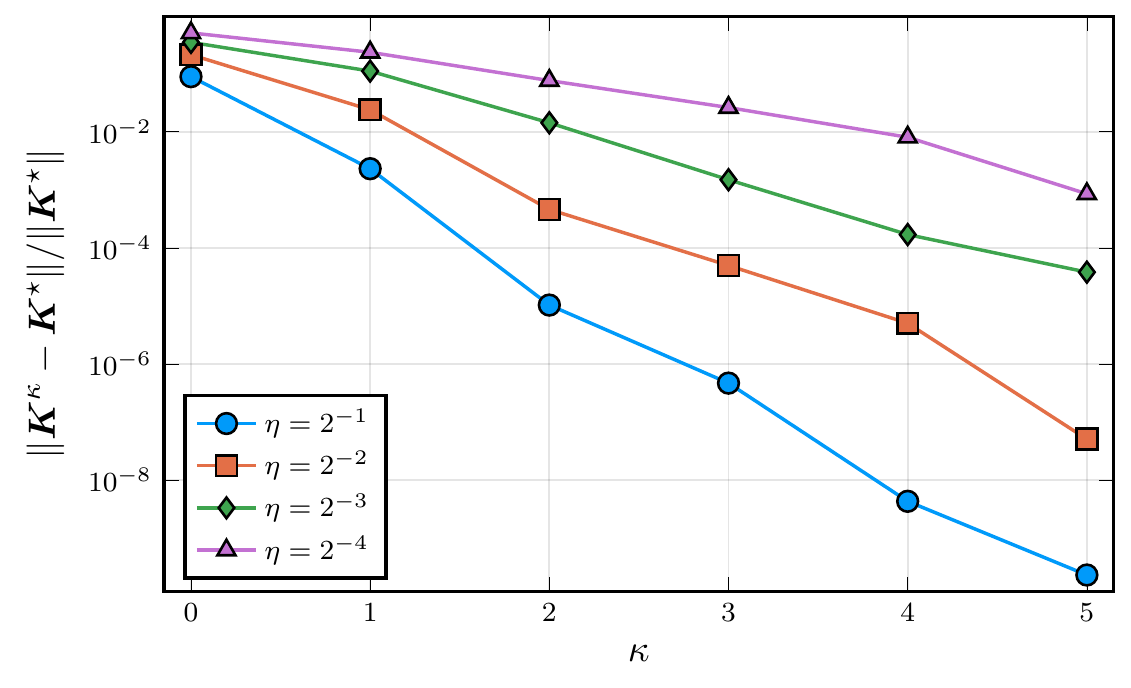}
  \includegraphics[width=.49\textwidth]{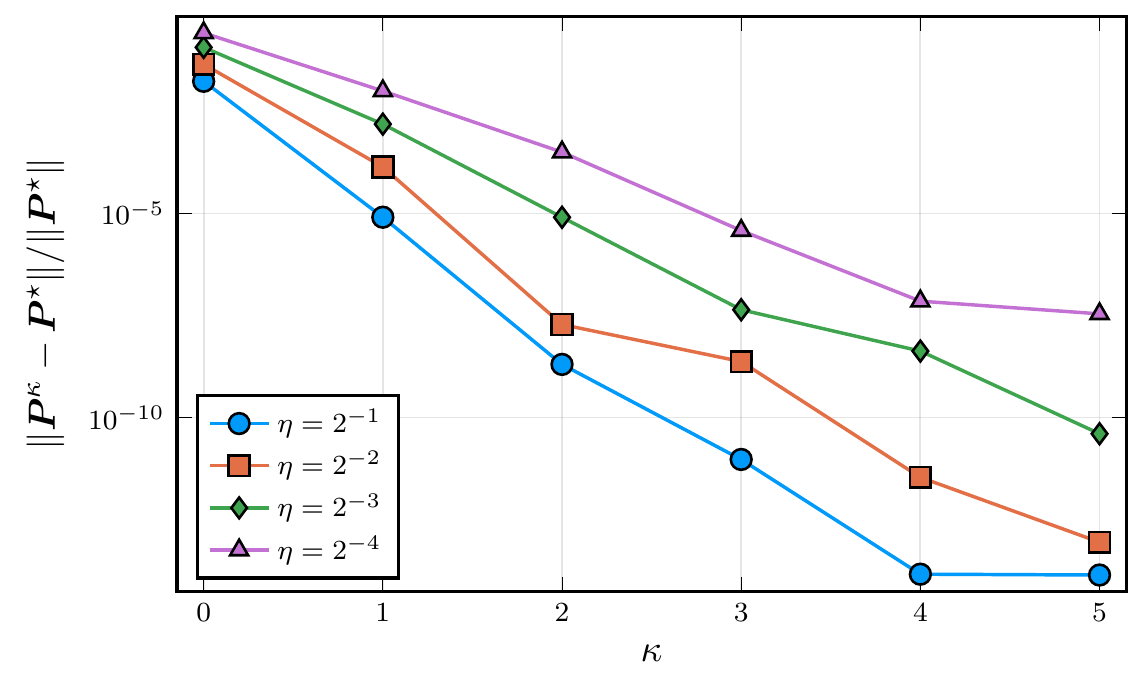}
  \includegraphics[width=.49\textwidth]{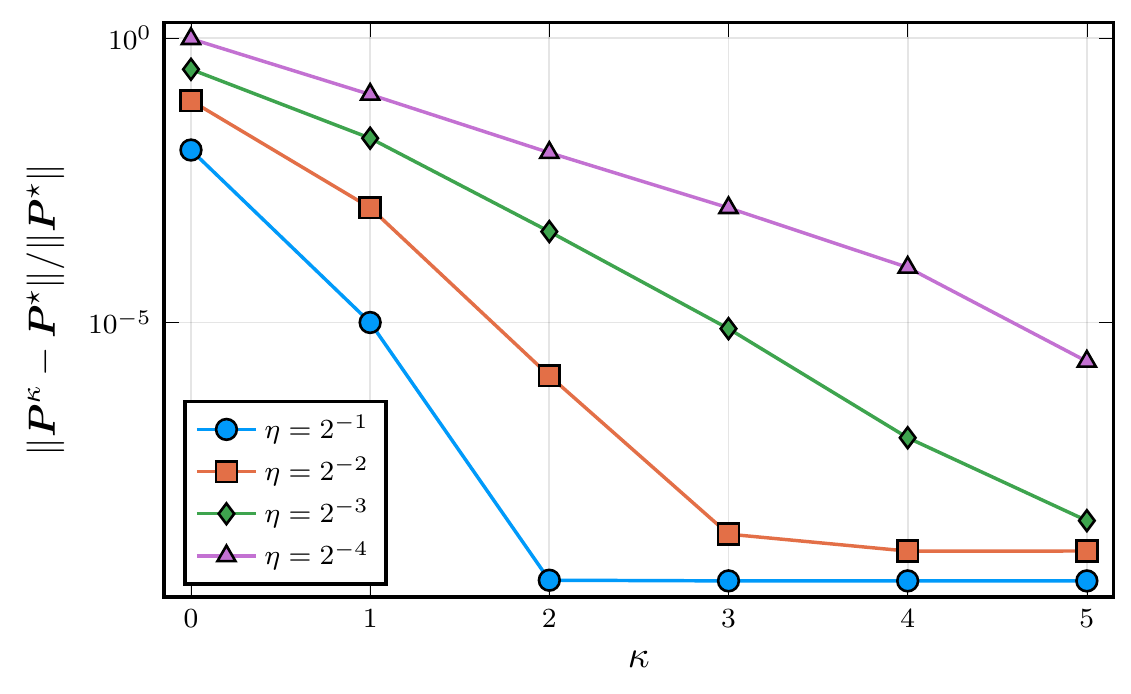}
  \caption{Numerical results. HVAC system control (left) and power system frequency control (right). Relative truncation error (top) and relative optimality gap (bottom).}
  \label{fig:num}
\end{figure}

\section{Conclusions}\label{sec:con}
{We have presented a stability and performance analysis of $\kappa$-distributed LQR.} This controller becomes more centralized as $\kappa$ becomes large and more decentralized as $\kappa$ becomes small. We have shown that under mild assumptions,  the $\kappa$-distributed controller is stabilizing for sufficiently large $\kappa$ and the optimality gap exponentially decays with $\kappa$. This result manifests the trade-off between decentralization and nominal performance; the control performance exponentially improves as the degree of decentralization is reduced. Thus, with a moderate degree of decentralization, distributed control can achieve {\it near-optimal performance}. Consequently, this result guides the design of distributed controllers that can balance computation loads, privacy, and performance. {As a future study, we are interested in analyzing the performance of $\kappa$-distributed control under more general settings, such as continuous-domain, nonlinear, constrained, and stochastic optimal control settings.}

{
  \subsection*{Acknowledgments}
  We are grateful to the anonymous referees, whose comments  greatly improved the paper.
}

\appendix
\section{Preliminaries}\label{apx:prelim}
This section presents the preliminary results for showing Theorems \ref{thm:decay}, \ref{thm:stab}. In Sections \ref{sec:inv}, \ref{sec:reg} we summarize the key results in \cite{shin2022exponential}. In Section \ref{sec:sys} we specialize the results in Sections \ref{sec:inv}, \ref{sec:reg} to finite-horizon LQR. In Section \ref{sec:dare} we present the bound result for the solutions of DARE. In Section \ref{sec:opt} we show the stability of the optimal LQR.

\subsection{Graph-Induced Banded Matrix Properties}\label{sec:inv}
First, we introduce the notion of graph-induced bandwidth. Recall that $\mathbb{I}$ denotes the set of integers and $\mathbb{I}_A := \mathbb{I} \cap A$ for any set $A$.
\begin{definition}[Graph-Induced Bandwidth]\label{def:bandwidth}
  Consider $\bH\in\mathbb{R}^{\bm_{\bH}\times \bn_{\bH}}$, a graph $\cG:=(\cV,\cE)$, and $\cI:=\{I_i\}_{i\in \cV}$, $\cJ:=\{J_i\}_{i\in \cV}$ that partition $\mathbb{I}_{[1,\bm_{\bH}]}$, $\mathbb{I}_{[1,\bn_{\bH}]}$, respectively. We say $\bH$ has bandwidth $B$ induced by $(\cG,\cI,\cJ)$ if $B$ is the smallest nonnegative integer satisfying $\bH_{[i][j]}:=\bH[I_i,J_j]=\bzero$ for any $i,j\in \cV$ with $d_\cG(i,j)> B$.
\end{definition}

We say a matrix is {\it graph-induced banded} if the matrix has a small bandwidth induced by a certain graph and index sets. A graph-induced banded matrix has the property that the $(i,j)$-th block of its inverse decays exponentially in the distance between $i$ and $j$. The following theorem establishes such a result.

\begin{theorem}\label{thm:inv}
  Consider $\bH\in\mathbb{R}^{\bn_{\bH}\times \bn_{\bH}}$, whose bandwidth is not greater than $1$, induced by $(\cG:=(\cV,\cE),\cJ:=\{J_i\}_{i\in \cV},\cI:=\{I_i\}_{i\in \cV})$; further, assume that $L_{\bH},\gamma_{\bH}>0$ satisfy $\gamma_{\bH}\leq \sigma(\bH) \leq L_{\bH}$ for all singular values of $\bH$. Then, $\Vert (\bH^{-1})_{[i][j]}\Vert \leq\Upsilon\rho^{d_\cG(i,j)}$ for $i,j\in \cV$, where $(\bH^{-1})_{[i][j]}:=(\bH^{-1})[I_i,J_j]$ and $\rho,\Upsilon$ are defined in \eqref{eqn:decay-constants}.
\end{theorem}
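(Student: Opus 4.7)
\textbf{Proof plan for Theorem \ref{thm:inv}.} The plan is to exploit the fact that polynomials in a banded matrix inherit a (linearly growing) banded structure, and to approximate $\bH^{-1}$ by a truncated Neumann series whose truncation error decays geometrically. First I would observe that since $\bH$ has bandwidth $\leq 1$ induced by $(\cG,\cI,\cJ)$, its transpose $\bH^\top$ has bandwidth $\leq 1$ induced by $(\cG,\cJ,\cI)$, and consequently the symmetric positive definite matrix $\bH^\top\bH$ has bandwidth $\leq 2$ induced by $(\cG,\cJ,\cJ)$. The singular value bound on $\bH$ translates to $\gamma_{\bH}^2\leq\lambda(\bH^\top\bH)\leq L_{\bH}^2$.

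Next I would introduce the scaling $c:=2/(L_{\bH}^2+\gamma_{\bH}^2)$ and define $M:=\bI-c\bH^\top\bH$. A direct spectral computation gives $\|M\|=\tfrac{L_{\bH}^2-\gamma_{\bH}^2}{L_{\bH}^2+\gamma_{\bH}^2}=\rho^2<1$, so that the Neumann series
\begin{equation*}
\bH^{-1}=(\bH^\top\bH)^{-1}\bH^\top=c\sum_{m=0}^{\infty}M^m\bH^\top
\end{equation*}
converges in operator norm. Because $M$ has bandwidth $\leq 2$, the product $M^m\bH^\top$ has bandwidth $\leq 2m+1$, so the $(i,j)$-block of $M^m\bH^\top$ vanishes whenever $d_\cG(i,j)>2m+1$. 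Consequently the only terms contributing to $(\bH^{-1})_{[i][j]}$ are those with $m\geq\lceil(d_\cG(i,j)-1)/2\rceil$.

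Then I would bound each contributing block by its full operator norm, $\|(M^m\bH^\top)_{[i][j]}\|\leq\|M\|^m\|\bH^\top\|\leq\rho^{2m}L_{\bH}$, and sum the resulting geometric series:
\begin{equation*}
\|(\bH^{-1})_{[i][j]}\|\leq cL_{\bH}\sum_{m=\lceil(d_\cG(i,j)-1)/2\rceil}^{\infty}\rho^{2m}=\frac{cL_{\bH}}{1-\rho^2}\rho^{2\lceil(d_\cG(i,j)-1)/2\rceil}\leq\frac{cL_{\bH}}{\rho(1-\rho^2)}\rho^{d_\cG(i,j)}.
\end{equation*}
A short calculation shows that $c/(1-\rho^2)=1/\gamma_{\bH}^2$, which collapses the leading constant to $L_{\bH}/(\gamma_{\bH}^2\rho)=\Upsilon$, yielding the claimed bound. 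The auxiliary claim $\rho\in(0,1)$ is immediate from $0<\gamma_{\bH}\leq L_{\bH}$, and $\Upsilon\geq 1$ follows from $L_{\bH}\geq\gamma_{\bH}$ together with $\rho<1$.

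The main obstacle is not analytical but bookkeeping: one has to be careful that the bandwidth additivity $\mathrm{bw}(M^m\bH^\top)\leq 2m+1$ is proved under the \emph{correct} partition pairings (rows partitioned by $\cJ$ for $M^m$ and matched with $\cJ$-rows of $\bH^\top$, yielding a final $(\cI,\cJ)$ partition consistent with $\bH^{-1}$), and that the geometric-series truncation index $\lceil(d_\cG(i,j)-1)/2\rceil$ produces the desired exponent $d_\cG(i,j)$ in $\rho$ rather than $d_\cG(i,j)/2$; this is where the factor $1/\rho$ in $\Upsilon$ enters. The rest is elementary algebra on the parameters $c$, $\rho^2$, $\gamma_{\bH}$, and $L_{\bH}$.
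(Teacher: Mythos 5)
Your proposal is correct and follows essentially the same route as the proof the paper defers to in \cite{shin2022exponential}: writing $\bH^{-1}=(\bH^\top\bH)^{-1}\bH^\top$, expanding $(\bH^\top\bH)^{-1}$ as a Neumann series in $M=\bI-c\bH^\top\bH$ with $c=2/(L_{\bH}^2+\gamma_{\bH}^2)$, and using the linear growth of the graph-induced bandwidth of $M^m\bH^\top$ to truncate the series. The fact that your constants collapse exactly to the paper's $\rho=\bigl((L_{\bH}^2-\gamma_{\bH}^2)/(L_{\bH}^2+\gamma_{\bH}^2)\bigr)^{1/2}$ and $\Upsilon=L_{\bH}/\gamma_{\bH}^2\rho$ confirms the match.
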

The proof is available in \cite{shin2022exponential}. Since $\rho\in(0,1)$, Theorem \ref{thm:inv} says that the $(i,j)$-th block of $\bH^{-1}$ decays exponentially with respect to $d_\cG(i,j)$, and the decay rate $\rho$ becomes faster as the condition number of $\bH$ becomes small. Note that one can use different graph and index sets for imposing structure on the same matrix. Later we will see that one can characterize the sparsity structure of the KKT matrix of finite-horizon LQR either with a temporal graph (Figure \ref{fig:space-time-graph}, top), where the spatial coupling is collapsed into a single node, or with a space-time graph, where each node represents a particular agent in the space at a particular time (Figure \ref{fig:space-time-graph}, bottom).

\subsection{Uniform Regularity}\label{sec:reg}
Theorem \ref{thm:inv} says that we need explicit upper and lower bounds of the singular values of $\bH$ in order to bound $\Upsilon,\rho$. We now  focus on the matrices that derive from the KKT conditions of equality-constrained quadratic programs by assuming
$\bH:=
\begin{bmatrixfn}
  \bG&\bF^\top\\
  \bF
\end{bmatrixfn}$,
where $\bG$ is symmetric. The following theorem bounds the singular values of $\bH$; the sufficient conditions here are called {\it uniform regularity} conditions.

\begin{theorem}\label{thm:uniform-regularity}
  Suppose that there exist $L_{\bH},\gamma_{\bG},\gamma_{\bF}> 0$ such that
  \begin{align}\label{eqn:uniform-regularity}
    \|\bH\|\leq L_{\bH},\quad ReH(\bG,\bF)\succeq \gamma_{\bG} \bI,\quad \bF \bF^\top \succeq \gamma_{\bF} I.
  \end{align}
  Here $ReH(\bG,\bF):=\bZ^\top \bG \bZ$, where $\bZ$ is {a null space matrix} for $\bF$. 
  Then $\gamma_{\bH}\leq \sigma(\bH)\leq L_{\bH}$ for all singular values of $\bH$, where $\gamma_{\bH}$ is defined in \eqref{eqn:decay-constants}.
\end{theorem}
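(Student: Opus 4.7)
The plan is to establish the upper bound $\sigma(\bH)\leq L_{\bH}$ as an immediate consequence of $\|\bH\|\leq L_{\bH}$ (since the largest singular value equals the operator norm), and to obtain the lower bound $\sigma(\bH)\geq \gamma_{\bH}$ by showing that $\bH$ is invertible and that $\|\bH^{-1}\|\leq 1/\gamma_{\bH}$. To do the latter, I will fix an arbitrary right-hand side $b = (b_1;b_2)$, solve the saddle-point system $\bH z = b$ with $z=(x;\lambda)$, and bound $\|z\|$ in terms of $\|b\|$.

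The main tool is a null-space decomposition of the primal variable $x$. Taking $\bZ$ to have orthonormal columns spanning $\mathrm{null}(\bF)$, write $x = \bZ y + \bF^\top w$. The second block equation $\bF x = b_2$ collapses to $(\bF\bF^\top) w = b_2$, which is uniquely solvable since $\bF\bF^\top\succeq \gamma_{\bF}\bI$, giving $\|w\|\leq \|b_2\|/\gamma_{\bF}$. Projecting the first block equation $\bG x + \bF^\top \lambda = b_1$ onto the null space via $\bZ^\top$ annihilates the multiplier term because $\bF\bZ = \bzero$, producing $(\bZ^\top \bG \bZ)\,y = \bZ^\top b_1 - \bZ^\top \bG \bF^\top w$. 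Since $\bZ^\top\bG\bZ = \ReH(\bG,\bF) \succeq \gamma_{\bG}\bI$, this determines $\|y\|$ in terms of $\|b_1\|$ and $\|w\|$. For the dual variable, I will use $\bF^\top\lambda = b_1 - \bG x$, pre-multiply by $\bF$, and invert $\bF\bF^\top$ to solve for $\lambda$; uniqueness follows because $\bF^\top$ is injective whenever $\bF\bF^\top \succ \bzero$.

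Chaining these estimates produces a bound of the form $\|x\|\leq c_1\|b_1\| + c_2\|b_2\|$ and $\|\lambda\|\leq c_3\|b_1\| + c_4\|b_2\|$ with each $c_k$ a polynomial in $1/\gamma_{\bG}$, $1/\gamma_{\bF}$, and $L_{\bH}$, which when squared and combined yields $\|z\|\leq \gamma_{\bH}^{-1}\|b\|$ and hence the desired singular-value bound.

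The main obstacle is not the structure of the argument but the careful bookkeeping required to match the exact form of $\gamma_{\bH}$ stated in \eqref{eqn:decay-constants}. In particular, the auxiliary quantity $\overline{\mu}$ most naturally arises as an intermediate upper bound on $\|\lambda\|/\|b\|$ computed before re-substituting the refined bound on $\|x\|$; the additional factor $(1 + 4L_{\bH}/\gamma_{\bG} + 4L_{\bH}^2/\gamma_{\bG}^2)$ suggests bounding an intermediate quadratic norm $\|y\|^2+\|w\|^2$ and propagating it back to $\|x\|^2+\|\lambda\|^2$ through a triangle-inequality-like expansion rather than the one-shot orthogonal decomposition I sketched above. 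Aligning the intermediate bounds with the stated closed-form constant is the tedious but essentially mechanical component of the proof, and it is the place where one must follow the derivation in the companion paper \cite{shin2022exponential} closely.
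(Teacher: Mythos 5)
Your upper bound is exactly the paper's (largest singular value equals the induced $2$-norm), and your overall strategy for the lower bound --- invert $\bH$ explicitly and bound $\|\bH^{-1}\|$ --- is also the paper's. But the mechanism is genuinely different. You use the null-space method for saddle-point systems: decompose $x=\bZ y+\bF^\top w$, solve $\bF\bF^\top w=b_2$, project the first block equation by $\bZ^\top$ to get $y$ from $\ReH(\bG,\bF)\succeq\gamma_{\bG}\bI$, and recover $\lambda$ from $\bF\bF^\top\lambda=\bF(b_1-\bG x)$. That is a sound and arguably more elementary argument, and it does prove invertibility of $\bH$ together with an explicit bound $\|\bH^{-1}\|\leq C(L_{\bH},1/\gamma_{\bG},1/\gamma_{\bF})$. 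The paper instead forms the augmented matrix $\bT:=\bG+\overline{\mu}\bF^\top\bF$, invokes \cite[Lemma 5.6]{shin2022exponential} to get $(\gamma_{\bG}/2)\bI\preceq\bT\preceq L_{\bH}(1+\overline{\mu}L_{\bH})\bI$, writes the closed-form block inverse of $\bH$ in terms of $\bT^{-1}$ and $(\bF\bT^{-1}\bF^\top)^{-1}$, and sums the block norms. This is where your reverse-engineering of the constant goes astray: $\overline{\mu}$ is not an intermediate bound on $\|\lambda\|/\|b\|$ but the augmentation parameter that makes $\bG+\overline{\mu}\bF^\top\bF$ uniformly positive definite (it also reappears additively through the $\overline{\mu}\bI-(\bF\bT^{-1}\bF^\top)^{-1}$ block), and the factor $1+4L_{\bH}/\gamma_{\bG}+4L_{\bH}^2/\gamma_{\bG}^2$ is $(1+\|\bF\bT^{-1}\|)^2$ expanded with $\|\bT^{-1}\|\leq 2/\gamma_{\bG}$, not a propagation of $\|y\|^2+\|w\|^2$.

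The one substantive caveat is that the theorem asserts the lower bound with the \emph{specific} constant $\gamma_{\bH}$ of \eqref{eqn:decay-constants}, which is hard-wired into $\rho$, $\Upsilon$, and every downstream result. Your null-space route will produce a different polynomial $C$, and the theorem as stated only follows if you verify $C\leq\gamma_{\bH}^{-1}$ (or you redefine $\gamma_{\bH}$ to match your derivation, which would ripple through \eqref{eqn:decay-constants}). You flag this yourself, but your proposed fix --- ``follow the derivation in the companion paper closely'' --- amounts to abandoning the null-space decomposition for the augmented-Lagrangian block inverse, i.e., switching to the paper's proof rather than completing yours.
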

\begin{proof}
  The upper bound follows from that the singular values are upper bounded by the induced 2-norm. By \cite[Lemma 5.6]{shin2022exponential}, we have that $(\gamma_{\bG}/2)\bI\preceq \bT\preceq L_{\bH}(1+\overline{\mu} L_{\bH})$ for $\bT:=\bG+\overline{\mu}\bF^\top \bF$ and $\overline{\mu}$ given in \eqref{eqn:decay-constants}. We observe that:
  \begin{align*}
      \bH^{-1}=
    \begin{bmatrixfn}
      \bT^{-1} - \bT^{-1}\bF^\top (\bF \bT^{-1}\bF^\top)^{-1} \bF \bT^{-1} & \bT^{-1}\bF^\top (\bF \bT^{-1}\bF^\top)^{-1}\\
      (\bF \bT^{-1}\bF^\top)^{-1} \bF \bT^{-1} &  \overline{\mu} \bI-(\bF \bT^{-1}\bF^\top)^{-1}\\
    \end{bmatrixfn},
  \end{align*}
  and accordingly, the lower bound is obtained by:
  \begin{align*}
    \|\bH^{-1}\|
    &\leq\|\bT^{-1}\| + \|\bT^{-1}\bF^\top (\bF \bT^{-1}\bF^\top)^{-1} \bF \bT^{-1}\| + 2 \| \bT^{-1}\bF^\top (\bF \bT^{-1}\bF^\top)^{-1}\|\\
    &\qquad+ \|\overline{\mu} \bI\|+\|(\bF \bT^{-1}\bF^\top)^{-1}\|\\
    &\leq\|\bT^{-1}\| + \left(1+2\|\bF\bT^{-1}\|+\|\bF\bT^{-1}\|^2 \right)\frac{1}{\ulambda(\bF \bT^{-1}\bF^\top)} + \overline{\mu}\bI \leq \gamma_{\bH}^{-1}.
  \end{align*}
\end{proof}
Theorem \ref{thm:uniform-regularity} suggests that under  uniform regularity, we have the desired boundedness of the singular values of $\bH$, which in turn guarantees that the decay bounds in Theorem \ref{thm:inv} are bounded. 

\subsection{Stabilizability and Detectability Imply Uniform Regularity}\label{sec:sys}
Now we aim to establish the relation between uniform regularity and system-theoretical properties (in particular, stabilizability and detectability). We first further specialize the setting to finite-horizon LQR. Consider the following.
\begin{subequations}\label{eqn:lqr-finite}\small
  \begin{align}
    \min_{\{\bx(t)\}_{t=0}^T,\{\bu(t)\}_{t=0}^{T-1}}\; &(1/2)\bx(T)^\top \bP\bx(T)+ \sum_{t=0}^{T-1} (1/2)\bx(t)^\top\bQ\bx(t)+(1/2)\bu(t)^\top\bR\bu(t)  \\
    \st\;&\bx(0)=\obx \quad(\blambda(0))\label{eqn:lqr-finite-con-1}\\
                                 &\bx(t+1)=\bA\bx(t) +\bB\bu(t),\;\forall t=0,1,\cdots,T-1.\quad(\blambda(t+1))\label{eqn:lqr-finite-con-2}
  \end{align}
\end{subequations}
Here $\{\blambda(t)\}_{t=0}^T$ are the dual variables associated with constraints \eqref{eqn:lqr-finite-con-1}--\eqref{eqn:lqr-finite-con-1}. We let $\bG\in\mathbb{R}^{(\bn_{\bx}(T+1)+\bn_{\bu} T) \times (\bn_{\bx}(T+1)+\bn_{\bu} T)}$ and $\bF\in\mathbb{R}^{\bn_{\bx}(T+1)\times (\bn_{\bx}(T+1)+\bn_{\bu} T)}$ be the objective Hessian and constraint Jacobian of \eqref{eqn:lqr-finite}:  
\begin{align}\label{eqn:bGF}
  \bG:=
  {\footnotesize\begin{bmatrixfn}
    \bQ\\
    &\bR\\
    &&\ddots\\
    &&&\bQ\\
    &&&&\bR\\
    &&&&&\bP\\
  \end{bmatrixfn}},\;
  \bF:=
  {\footnotesize\begin{bmatrixfn}
      \bI\\
      -\bA&-\bB&\bI\\
      &&-\bA&-\bB&\bI\\
      &&&&\ddots\\
      &&&&-\bA&-\bB&\bI\\
      &&&&&&-\bA&-\bB&\bI\\
    \end{bmatrixfn}}.
\end{align}
 Note that $\bG,\bF$, and $\bH$ depend on the horizon length $T$, but we suppress the dependency to reduce the notational burden. We now establish the relation between Assumption \ref{ass:main} and uniform regularity. 
\begin{theorem}\label{thm:ctrb}
  Under Assumption \ref{ass:main} and $\bQ \preceq \bP\preceq L_{\bP}\bI$, \eqref{eqn:uniform-regularity} holds for $L_{\bH},\gamma_{\bG},\gamma_{\bF}$ defined in \eqref{eqn:decay-constants}.
\end{theorem}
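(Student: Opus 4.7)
The plan is to verify the three conditions in \eqref{eqn:uniform-regularity} separately, leveraging the explicit block structure of $\bG$ and $\bF$ in \eqref{eqn:bGF} and the ingredients of Assumption~\ref{ass:main}. For the norm bound $\|\bH\|\leq L_{\bH}$, I would apply the block Gershgorin inequality $\|\bH\|\leq \max_i\sum_j\|\bH_{ij}\|$, which holds for any symmetric block matrix (via Cauchy--Schwarz together with $\|\bH\|_2^2\leq\|\bH\|_{\text{block-1}}\|\bH\|_{\text{block-}\infty}$) and with blocks indexed by the variable groups $\bx(t)$, $\bu(t)$, $\blambda(t)$. Tabulating the nonzero entries of each block row using $\|\bA\|,\|\bB\|,\|\bQ\|,\|\bR\|\leq L$ and $\|\bP\|\leq L_{\bP}$ gives a block-row sum of at most $2L+1$ for the rows corresponding to $\bx(t)$ ($t<T$), $\bu(t)$, and $\blambda(t)$, and of at most $L_{\bP}+1$ for the row of $\bx(T)$; the maximum is exactly $\max(2L+1,L_{\bP}+1)=L_{\bH}$.

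For $\bF\bF^\top\succeq\gamma_{\bF}\bI$, I would show $\|\bF^\top z\|^2\geq\gamma_{\bF}\|z\|^2$ for every $z=[z_0;\ldots;z_T]$. A direct expansion gives
\begin{equation*}
\|\bF^\top z\|^2 = \sum_{t=0}^{T-1}\|w_t\|^2 + \sum_{t=0}^{T-1}\|v_t\|^2 + \|z_T\|^2,\quad w_t := z_t - \bA^\top z_{t+1},\;\; v_t := \bB^\top z_{t+1}.
\end{equation*}
Invoking $(\bA,\bB)$-stabilizability (Assumption~\ref{ass:main}\ref{ass:stab}), let $\obK$ be an $(L,\alpha)$-stabilizing gain and set $\bA_K:=\bA-\bB\obK$. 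Rewriting the backward recursion as $z_t = \bA_K^\top z_{t+1} + w_t + \obK^\top v_t$ and unfolding to the terminal time yields $z_t = (\bA_K^\top)^{T-t}z_T + \sum_{s=t}^{T-1}(\bA_K^\top)^{s-t}(w_s + \obK^\top v_s)$. Applying $\|\bA_K^j\|\leq L\alpha^j$ and $\|\obK\|\leq L$, followed by Cauchy--Schwarz on the convolution against the geometric sequence $\alpha^{s-t}$ and summing over $t$, I expect a bound of the form $\sum_t\|z_t\|^2 \leq C\,L^2(1+L)^2(1-\alpha)^{-2}\,\|\bF^\top z\|^2$, which rearranges into the stated $\gamma_{\bF}$.

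For $ReH(\bG,\bF)\succeq\gamma_{\bG}\bI$, any $y\in\ker\bF$ encodes a trajectory with $\bx(0)=\bzero$ and $\bx(t+1)=\bA\bx(t)+\bB\bu(t)$, so
\begin{equation*}
y^\top\bG y = \bx(T)^\top\bP\bx(T) + \sum_{t=0}^{T-1}\!\big(\bx(t)^\top\bQ\bx(t)+\bu(t)^\top\bR\bu(t)\big) \geq \sum_{t=0}^{T-1}\!\big(\bx(t)^\top\bQ\bx(t)+\gamma\|\bu(t)\|^2\big)
\end{equation*}
using $\bR\succeq\gamma\bI$ and $\bP\succeq\bzero$. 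To upgrade this into a lower bound on $\|y\|^2 = \sum_{t=0}^T\|\bx(t)\|^2 + \sum_{t=0}^{T-1}\|\bu(t)\|^2$, I would invoke $(\bA,\bQ^{1/2})$-detectability: choose $\obK'$ with $\|\obK'\|\leq L$ such that $\bA_{K'}:=\bA-\obK'\bQ^{1/2}$ is $(L,\alpha)$-stable, write $\bx(t+1) = \bA_{K'}\bx(t) + \obK'\bQ^{1/2}\bx(t) + \bB\bu(t)$, unfold forward from $\bx(0)=\bzero$, and apply Cauchy--Schwarz on the convolution against $\alpha^{t-1-s}$. This should produce $\sum_t\|\bx(t)\|^2 \leq C'\,L^4\gamma^{-1}(1-\alpha)^{-2}\,y^\top\bG y$, which combined with the trivial $\sum_t\|\bu(t)\|^2\leq\gamma^{-1}y^\top\bG y$ yields $\|y\|^2\leq\gamma_{\bG}^{-1}\,y^\top\bG y$.

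The main obstacle I anticipate is the constant bookkeeping required to match the precise closed forms of $\gamma_{\bF}$ and $\gamma_{\bG}$ in \eqref{eqn:decay-constants}: the conceptual structure of each step is dictated by stabilizability, detectability, and standard discrete-time convolution estimates, but carefully absorbing the resulting factors of $L$, $(1-\alpha)$, and $\gamma$ into the claimed forms requires some patience. Beyond this bookkeeping, no step looks delicate.
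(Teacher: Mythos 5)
Your proposal is correct and follows essentially the same route as the paper: the block row/column-sum bound for $\|\bH\|$, inversion of the dynamics via the stabilizing gain $\obK$ with geometrically decaying powers of $\bA-\bB\obK$ for $\bF\bF^\top\succeq\gamma_{\bF}\bI$, and detectability together with $\bR\succeq\gamma\bI$ on $\ker\bF$ for the reduced Hessian. The only cosmetic difference is that the paper packages your recursion-unfolding as a triangular matrix factorization (multiplying $\bF$ by a unit lower-triangular matrix built from $\obK$) and, for the reduced-Hessian step, reuses the already-proved $\bF\bF^\top$ bound with the substitution $\bA\leftarrow\bA^\top$, $\bB\leftarrow\bQ^{1/2}$ instead of re-deriving the convolution estimate.
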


We first present a helper lemma, whose proof is available in \cite{shin2022exponential}.
\begin{lemma}\label{lem:sqrt}
  The following holds for $\bM:=\{M_{ij}\}_{i,j\in\cV}$:
  \begin{align*}
    \|\bM\|\leq \Big(\max_{i\in \cV}\sum_{j\in \cV}\Vert M_{ij}\Vert\Big)^{1/2}\Big(\max_{j\in \cV}\sum_{i\in \cV}\Vert M_{ij}\Vert\Big)^{1/2}.
  \end{align*}
\end{lemma}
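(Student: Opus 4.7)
The plan is to prove the lemma by a block-matrix version of Schur's test, mirroring the classical bound $\|A\|_2 \le \sqrt{\|A\|_1 \|A\|_\infty}$ for scalar matrices. Let $C := \max_{i\in\cV} \sum_{j\in\cV} \|M_{ij}\|$ and $R := \max_{j\in\cV} \sum_{i\in\cV} \|M_{ij}\|$ denote the two quantities appearing on the right-hand side; the goal is to show $\|\bM\| \le \sqrt{CR}$, i.e., to show $\|\bM\bx\|^2 \le CR\,\|\bx\|^2$ for every vector $\bx = \{x_j\}_{j\in\cV}$ partitioned conformally with the block structure of $\bM$.

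First I would write $(\bM\bx)_i = \sum_{j\in\cV} M_{ij} x_j$ and apply the triangle inequality followed by the induced-norm bound $\|M_{ij} x_j\| \le \|M_{ij}\|\,\|x_j\|$, so that $\|(\bM\bx)_i\| \le \sum_{j\in\cV} \|M_{ij}\|\,\|x_j\|$. Then I would split each summand as $\|M_{ij}\|^{1/2}\cdot (\|M_{ij}\|^{1/2}\|x_j\|)$ and apply the Cauchy--Schwarz inequality to obtain
\begin{align*}
\|(\bM\bx)_i\|^2 \;\le\; \Big(\sum_{j\in\cV}\|M_{ij}\|\Big)\Big(\sum_{j\in\cV}\|M_{ij}\|\,\|x_j\|^2\Big) \;\le\; C \sum_{j\in\cV}\|M_{ij}\|\,\|x_j\|^2.
\end{align*}

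Next I would sum over $i\in\cV$ and interchange the order of summation, which yields
\begin{align*}
\|\bM\bx\|^2 \;=\; \sum_{i\in\cV}\|(\bM\bx)_i\|^2 \;\le\; C\sum_{j\in\cV}\|x_j\|^2\sum_{i\in\cV}\|M_{ij}\| \;\le\; CR\sum_{j\in\cV}\|x_j\|^2 \;=\; CR\,\|\bx\|^2.
\end{align*}
Taking a square root and a supremum over unit $\bx$ gives the claimed bound.

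There is no real obstacle: the argument is the standard Schur test, and the only care needed is to make sure the induced $2$-norms $\|M_{ij}\|$ are used consistently in both the triangle inequality and the Cauchy--Schwarz step, and that $\|\bx\|^2 = \sum_j \|x_j\|^2$ by the orthogonal block decomposition. No assumptions beyond those implicit in the notation $\bM=\{M_{ij}\}_{i,j\in\cV}$ (finite $\cV$, conformable block sizes) are used, so the bound is purely algebraic and applies to every such block matrix.
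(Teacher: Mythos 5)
Your proof is correct and complete. The paper itself does not prove this lemma---it defers to the cited reference \cite{shin2022exponential}---so there is no in-paper argument to compare against, but your block-matrix Schur test (triangle inequality on each block row, Cauchy--Schwarz with the split $\|M_{ij}\|\,\|x_j\| = \|M_{ij}\|^{1/2}\cdot\|M_{ij}\|^{1/2}\|x_j\|$, then summation interchange) is the standard and essentially inevitable route to this bound, and every step checks out.
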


Now we come back to the proof of Theorem \ref{thm:ctrb}. We will establish $\|\bH\| \leq L_{\bH}$, $\bF\bF^\top \succeq \gamma_{\bF} \bI$, and $ReH(\bG,\bF)\succeq \gamma \bI $ in this order.

\begin{proof}[Proof of $\|\bH\|\leq L_{\bH}$]
  Let $I_t$ be the index set for $[\bx(t);\bu(t);\blambda (t)]$. We observe
  \begin{align*}
    \bH[I_t,I_{t'}]=
    \left\{
    \begin{footnotesize}
      \begin{aligned}
        &\begin{bmatrixfn}
          \bQ&&\bI\\
          &\bR&\\
          \bI
        \end{bmatrixfn},
        \text{ if } t=t'\neq T;\quad
        \begin{bmatrixfn}
          &&\bzero\\&&\bzero\\
          -\bA&-\bB&
        \end{bmatrixfn},
        \text{ if } t-t'= 1;\quad
        \bzero \text{ if } t > t'+1 
        \\
        &
        \begin{bmatrixfn}
          \bP&\bI\\
          \bI
        \end{bmatrixfn},
        \text{ if } t=t'=T;\quad
        \begin{bmatrixfn}
          &&\bzero\\
          -\bA&-\bB&
        \end{bmatrixfn},
        \text{ if } t=T,t'= T-1
      \end{aligned}
    \end{footnotesize}
               \right\}.
  \end{align*}
  Because of the symmetry, only blocks with $t\geq t'$ are denoted. From Assumption \ref{ass:main}\ref{ass:bdd}, $\bP\preceq L_{\bP}\bI$, and Lemma \ref{lem:sqrt}, we obtain $\|\bH\|\leq \max(2L+1,L_{\bP}+1)$.
\end{proof}
\begin{proof}[Proof of $  \bF\bF^\top \succeq \gamma_{\bF} \bI$]
  We consider the following column operation:
  \begin{align}\label{eqn:F}
    \footnotesize\begin{bmatrixfn}
      \bI\\
      -\bA&-\bB&\bI\\
      &&\ddots\\
      &&-\bA&-\bB&\bI\\
    \end{bmatrixfn}
    \begin{bmatrixfn}
      \bI\\
      -\obK&\bI\\
      &&\ddots\\
      &&-\obK&\bI\\
      &&&&\bI\\
    \end{bmatrixfn}
    =
    \begin{bmatrixfn}
      \bI\\
      -\obPhi& - \bB&\bI\\
      &&\ddots\\
      &&&-\obPhi& - \bB&\bI\\
    \end{bmatrixfn},
  \end{align}
  where $\obPhi:=\bA-\bB\obK$ and $\obK$ is the $(L,\alpha)$-stabilizing feedback gain (from Assumption \ref{ass:main}\ref{ass:stab}). This indicates $\bF\bF^\top \succeq \ulambda(\bM_2\bM_2^\top)\bM_1\bM_1^\top \succeq \|\bM_1^{-1}\|^{-2}\|\bM_2^{-1}\|^{-2}\bI$, where
  \begin{align*}
    \bM_1
    &:=
      \begin{bmatrixfn}
        \bI\\
        -\obPhi & \bI\\
        &&\ddots\\
        &&-\obPhi & \bI\\
      \end{bmatrixfn};\;
    \bM_1^{-1}=
    \begin{bmatrixfn}
      \bI\\
      \obPhi & \bI\\
      \vdots&&\ddots\\
      \obPhi^{T}&\cdots&\obPhi & \bI\\
    \end{bmatrixfn};\;
    \bM_2
    :=
    \begin{bmatrixfn}
      \bI\\
      -\obK&\bI\\
      &&\ddots\\
      &&-\obK&\bI\\
      &&&&\bI\\
    \end{bmatrixfn}^{-1}.
  \end{align*}
  {The first inequality follows from (i) $\bM'\bM(\bM')^\top \succeq \ulambda(\bM)\bM'(\bM')^\top$ for $\bM\succeq \bzero$ and (ii) the observation that $\bM_1$ is a submatrix of the matrix on the right-hand side of \eqref{eqn:F}. The second inequality follows from the fact that $\bM \succeq \|\bM^{-1}\|^{-1}\bI$ for $\bM\succ \bzero$.}
  From $\obK\leq L$ and $(L,\alpha)$-stability of $\obPhi$, we obtain $\|\bM^{-1}_1\|\leq L/(1-\alpha)$ and $\|\bM^{-1}_2\|\leq 1+L$. Thus, we have $\bF\bF^\top \succeq (1-\alpha)^2/L^2(1+L)^2$.
\end{proof}

\begin{proof}[Proof of $ReH(\bG,\bF)\succeq \gamma \bI $]
  Since $\bP\succeq \bQ$, it suffices to prove for $\bP=\bQ$. Let
  \begin{align*}\footnotesize
    \bG_1:=
    \begin{bmatrixfn}
      \bQ\\
      &\bQ\\
      &&\ddots\\
      &&&\bQ\\
    \end{bmatrixfn},\;
    \bG_2:=
    \begin{bmatrixfn}
      \bR\\
      &\bR\\
      &&\ddots\\
      &&&\bR\\
    \end{bmatrixfn},\;
    \bF_1:=
    \begin{bmatrixfn}
      \bI\\
      -\bA&\bI\\
      &\ddots&\ddots\\
      &&-\bA&\bI\\
    \end{bmatrixfn},\;
    \bF_2:=
    \begin{bmatrixfn}
      \\
      -\bB\\
      &\ddots\\
      &&-\bB\\
    \end{bmatrixfn}.
  \end{align*}
  It suffices to show that for any pair of vectors $(\bX,\bU)$ satisfying $\bF_1\bX + \bF_2 \bU=\bzero$, we have $\bX^\top\bG_1 \bX + \bU^\top \bG_2\bU\geq \gamma_{\bG} (\|\bX\|^2+\|\bU\|^2)$.  We observe the following:
  \begin{align}\label{eqn:detect-1}
    \bX^\top\bG_1 \bX + \bU^\top \bG_2\bU
    &\geq \|\bG_3\bX\|^2 + (\gamma/2)\|\bU\|^2 + \gamma/2L^2 \|\bF_1\bX\|^2\nonumber \\
    &\geq(\gamma/2L^2) \ulambda\left(
      \begin{bmatrixfn}
        \bF^\top_1& -\bG^\top_3\\
      \end{bmatrixfn}
    \begin{bmatrixfn}
      \bF^\top_1& -\bG^\top_3\\
    \end{bmatrixfn}^\top
    \right)
    \|\bX\|^2
    +  (\gamma/2)\|\bU\|^2,
  \end{align}
  where $\bG_3:=\diag(\bQ^{1/2},\cdots,\bQ^{1/2})$. Here the first inequality follows from $\|\bF_2\|\leq L$, $\bG_2\succeq \gamma\bI$, $\bF_1 \bX + \bF_2\bU=\bzero$, and the second inequality follows from $\gamma/2L^2<1$ (from Assumption \ref{ass:main}).
  Also, $[\bF^\top_1\; -\bG^\top_3]$ can be permuted to
  \begin{align*}
    \begin{bmatrixfn}
      -\bQ^{1/2}&\bI\\
      &-\bA^\top&-\bQ^{1/2}&\bI\\
      &&&\ddots\\
      &&&-\bA^\top&-\bQ^{1/2}&\bI\\
    \end{bmatrixfn}.
  \end{align*}
  From the Assumption \ref{ass:main}\ref{ass:dect} and applying $\bF\bF^\top \succeq \gamma_{\bF} \bI$ while replacing $\bA\leftarrow \bA^\top $ and $\bB\leftarrow \bQ^{1/2}$, we obtain $[\bF^\top_1\; -\bG^\top_3][\bF^\top_1\; -\bG^\top_3]^\top\succeq\frac{(1-\alpha)^2 }{L^2 (1+L)^2}\bI$. From \eqref{eqn:detect-1} and $(1-\alpha)^2 /2L^4 (1+L)^2<1$ {(from Assumption \ref{ass:main})}, we obtain the desired result.
\end{proof}

\subsection{Boundedness of the Solutions of DARE}\label{sec:dare}
Consider the DARE:
\begin{align}\label{eqn:dare}
  \bP = \bA^\top \bP \bA - (\bA^\top\bP\bB) (\bR+\bB^\top \bP\bB)^{-1}(\bB^\top\bP\bA) + \bQ.
\end{align}
We aim to establish the boundedness of the solution of \eqref{eqn:dare}. 

\begin{theorem}\label{thm:dare}
  Under Assumption \ref{ass:main}, there is a unique solution $\bP^\star$ of \eqref{eqn:dare} satisfying $\bQ \preceq \bP^\star \preceq L_{\bP} \bI$, where $L_{\bP}$ is defined in \eqref{eqn:decay-constants}.
\end{theorem}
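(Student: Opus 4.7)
The plan is to invoke the classical existence and uniqueness theorem for the DARE (e.g., \cite[Theorem 2.4-2]{lewis2012optimal}), which under the stabilizability and detectability conditions in Assumption \ref{ass:main}\ref{ass:stab}--\ref{ass:dect} (together with $\bQ\succeq \bzero$ and $\bR\succ \bzero$) already guarantees a unique positive semidefinite solution $\bP^\star$ of \eqref{eqn:dare} whose associated closed-loop matrix $\bPhi^\star = \bA - \bB\bK^\star$, with $\bK^\star=(\bR+\bB^\top\bP^\star\bB)^{-1}\bB^\top\bP^\star\bA$, is stable. The remaining work is then entirely the two-sided bound $\bQ\preceq\bP^\star\preceq L_{\bP}\bI$.

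For the lower bound, I would substitute $\bK^\star$ into \eqref{eqn:dare} to rewrite the DARE as the discrete Lyapunov equation $\bP^\star - (\bPhi^\star)^\top\bP^\star\bPhi^\star = \bQ + (\bK^\star)^\top\bR\bK^\star$. Stability of $\bPhi^\star$ then yields the convergent series
\begin{align*}
\bP^\star \;=\; \sum_{t=0}^\infty (\bPhi^\star)^{\top t}\bigl(\bQ + (\bK^\star)^\top \bR\bK^\star\bigr)(\bPhi^\star)^{t},
\end{align*}
and discarding every term past $t=0$ while using $(\bK^\star)^\top\bR\bK^\star\succeq\bzero$ immediately gives $\bP^\star\succeq\bQ$.

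For the upper bound, the plan is to exploit the optimality of $\bpi^\star(\cdot)$ by comparing it to the constant feedback $\bu=-\obK\bx$, where $\obK$ is the $(L,\alpha)$-stabilizing gain guaranteed by Assumption \ref{ass:main}\ref{ass:stab}. Setting $\obPhi:=\bA-\bB\obK$, I would define $\overline{\bP}$ as the (well-defined, because $\obPhi$ is $(L,\alpha)$-stable) solution of the Lyapunov equation $\overline{\bP} - \obPhi^\top\overline{\bP}\obPhi = \bQ + \obK^\top\bR\obK$; this is exactly the cost-to-go of the policy $-\obK$. The suboptimality of $\obK$ relative to $\bK^\star$ gives $\bP^\star\preceq\overline{\bP}$, after which unrolling the Lyapunov equation and applying $\|\obPhi^t\|\leq L\alpha^t$ together with $\|\bQ\|,\|\bR\|,\|\obK\|\leq L$ (Assumption \ref{ass:main}\ref{ass:bdd}) yields
\begin{align*}
\|\overline{\bP}\| \;\leq\; \sum_{t=0}^\infty L^2\alpha^{2t}\bigl(L + L\cdot L^2\bigr) \;=\; \frac{L^3(1+L^2)}{1-\alpha^2} \;=\; L_{\bP},
\end{align*}
and hence $\bP^\star\preceq L_{\bP}\bI$. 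The only non-routine step is the comparison $\bP^\star\preceq\overline{\bP}$: I would establish it either by appealing to the standard LQR fact that the DARE solution minimizes $\obx^\top\bP\obx$ over all cost matrices induced by stabilizing feedbacks, or by subtracting the two Riccati/Lyapunov equations and completing the square to exhibit $\overline{\bP}-\bP^\star$ as a convergent sum of positive semidefinite terms measuring the gap between $\obK$ and $\bK^\star$.
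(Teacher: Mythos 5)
Your proposal is correct and follows essentially the same route as the paper: existence and uniqueness from \cite[Theorem 2.4-2]{lewis2012optimal}, the lower bound from the first-stage cost term (your $t=0$ term of the Lyapunov series is exactly the paper's observation that the cost-to-go exceeds $(1/2)\obx^\top\bQ\obx$), and the upper bound by comparing against the suboptimal $(L,\alpha)$-stabilizing feedback $-\obK$, with the identical estimate $L^3(1+L^2)/(1-\alpha^2)=L_{\bP}$. The only cosmetic difference is that the paper phrases both bounds directly through the variational cost-to-go interpretation of $(1/2)\obx^\top\bP^\star\obx$ (so the comparison $\bP^\star\preceq\overline{\bP}$ is automatic, being a minimum versus one feasible trajectory), whereas you route through Lyapunov equations and a separate comparison step.
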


\begin{proof}
  The existence and uniqueness of the positive semi-definite solution follow from \cite[Theorem 2.4-2]{lewis2012optimal}. Recall that we use $\obx$ to denote the initial state of the centralized LQR problem \eqref{eqn:lqr}. The lower boundedness of $\bP^\star$ is trivial from the observation that $(1/2)\obx^\top \bP^\star\obx$ is the cost-to-go function, and the cost-to-go at $\obx$ is greater than $(1/2)\obx^\top\bQ\obx$ because of the first stage cost. Now we prove $\bP^\star\preceq L_{\bP}\bI$. From the observation that $(1/2)\obx^\top \bP^\star\obx$ is the cost-to-go function, it suffices to show that for any $\obx\in\mathbb{R}^{\bn_{\bx}}$, there exists a sequence $\{\bx(t),\bu(t)\}_{t\in\mathbb{I}_{\geq 0}}$ satisfying \eqref{eqn:lqr-con-1}-\eqref{eqn:lqr-con-2} and $\sum_{t=0}^\infty \bx(t)^\top\bQ\bx(t)+ \bu(t)^\top\bR\bu(t) \leq  L_{\bP} \|\obx\|^2$. We construct such a sequence by letting $\bu(t):=-\obK\bx(t)$, where $\obK$ is the $(L,\alpha)$-stabilizing feedback gain (from Assumption \ref{ass:main}\ref{ass:stab}). We can observe that $\|\bx(t)\| \leq \|(\bA-\bB\obK)^t \|\|\obx\|\leq L\alpha^t \|\obx\|$, $\|\bu(t)\|\leq \|\obK\|\|\bx(t)\|\leq L^2 \alpha^t\|\obx\|$. Thus, $\sum_{t=0}^\infty \bx(t)^\top\bQ\bx(t)+ \bu(t)^\top\bR\bu(t) \leq  \frac{L^3 ( 1+L^2)}{1-\alpha^2} \|\obx\|^2$. 
\end{proof}

\subsection{Exponential Stability of Optimal LQR}\label{sec:opt}
We now perform the stability analysis of the centralized LQR. In particular, we show the stability of $\bPhi^\star$. Recall that the state transition mapping for the optimal policy is defined as $\bphi^\star(x):= \bPhi^\star x$, where $\bPhi^\star := \bA - \bB \bK^\star$ and $\bK^\star$ is the optimal gain matrix. 
\begin{theorem}\label{thm:stab-0}
  Under Assumption \ref{ass:main}, $\bPhi^\star$ is $(\Upsilon,\rho)$-stable (defined in \eqref{eqn:decay-constants}).
\end{theorem}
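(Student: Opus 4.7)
My plan is to realize $(\bPhi^\star)^t$ as a block of the inverse of a KKT matrix for a finite-horizon LQR and then apply the machinery of Sections~\ref{sec:inv}--\ref{sec:sys}. Specifically, I would instantiate the finite-horizon problem \eqref{eqn:lqr-finite} with terminal cost $\bP := \bP^\star$, where $\bP^\star$ is the unique positive semi-definite solution of the DARE \eqref{eqn:dare}. By the standard dynamic programming identity for LQR (the cost-to-go is exactly $(1/2)\bx^\top \bP^\star \bx$), this choice makes the finite-horizon optimal state trajectory coincide with the infinite-horizon closed-loop trajectory, so that for any horizon length $T$ and any $t\in\{0,\ldots,T\}$ we have $\bx(t)=(\bPhi^\star)^t \obx$.

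Next, I would verify the uniform regularity of the associated KKT matrix $\bH=\begin{bmatrix}\bG & \bF^\top\\ \bF & \bzero\end{bmatrix}$ with $\bG,\bF$ as in \eqref{eqn:bGF}. Theorem~\ref{thm:dare} gives $\bQ\preceq \bP^\star\preceq L_{\bP}\bI$, which is exactly the hypothesis required by Theorem~\ref{thm:ctrb}; hence the uniform regularity bounds \eqref{eqn:uniform-regularity} hold with the constants in \eqref{eqn:decay-constants}. Theorem~\ref{thm:uniform-regularity} then yields $\gamma_{\bH}\leq \sigma(\bH)\leq L_{\bH}$. Reordering variables so that the primal-dual unknowns at time $t$ are grouped into a single block $I_t$ (containing the indices of $\bx(t)$, $\bu(t)$, and $\blambda(t)$, and similarly $I_T$ for $(\bx(T),\blambda(T))$), one checks directly from \eqref{eqn:bGF} that $\bH$ has bandwidth one induced by the temporal graph of Figure~\ref{fig:space-time-graph}~(top). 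Theorem~\ref{thm:inv} then delivers the block bound
\[
\bigl\|(\bH^{-1})[I_t,I_0]\bigr\|\leq \Upsilon\,\rho^{|t|}.
\]

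The last step is to extract $(\bPhi^\star)^t$ from $\bH^{-1}$. The right-hand side of the KKT system is nonzero only in the dual block corresponding to $\blambda(0)$ (where it equals $\obx$). Hence, writing $I_t^{\bx}\subseteq I_t$ for the rows of $\bx(t)$ and $I_0^{\blambda}\subseteq I_0$ for the columns of $\blambda(0)$, the primal-dual inverse formula gives $\bx(t)=(\bH^{-1})[I_t^{\bx},I_0^{\blambda}]\,\obx$. Combining this identity with $\bx(t)=(\bPhi^\star)^t\obx$ and the block norm bound above, and noting that $\|(\bH^{-1})[I_t^{\bx},I_0^{\blambda}]\|\leq \|(\bH^{-1})[I_t,I_0]\|$, I obtain $\|(\bPhi^\star)^t\obx\|\leq \Upsilon\rho^t\|\obx\|$ for every $\obx$ and every $t\in\{0,\ldots,T\}$. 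Since $T$ is arbitrary and the constants in \eqref{eqn:decay-constants} are independent of $T$, this shows $\|(\bPhi^\star)^t\|\leq \Upsilon\rho^t$ for all $t\in\mathbb{I}_{\geq 0}$.

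The main obstacle is bookkeeping rather than analysis: one has to be careful that the interleaved ordering of $(\bx(t),\bu(t),\blambda(t))$ really yields bandwidth one (this follows from the shift pattern in $\bF$) and that the constants $L_{\bH}$, $\gamma_{\bG}$, $\gamma_{\bF}$, and thus $\Upsilon,\rho$, depend only on $L,\alpha,\gamma,L_{\bP}$ and not on $T$, so that the final estimate is uniform in the auxiliary horizon. Everything else is a direct invocation of Theorems~\ref{thm:inv}, \ref{thm:uniform-regularity}, \ref{thm:ctrb}, and \ref{thm:dare}.
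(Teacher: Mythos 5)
Your proposal is correct and follows essentially the same route as the paper's proof: instantiate the finite-horizon problem \eqref{eqn:lqr-finite} with terminal cost $\bP=\bP^\star$, observe that the KKT matrix has bandwidth one induced by the temporal graph with blocks $I_t=[\bx(t);\bu(t);\blambda(t)]$, invoke Theorems~\ref{thm:inv}, \ref{thm:uniform-regularity}, \ref{thm:ctrb}, and \ref{thm:dare} to get $\|(\bH^{-1})_{[t][0]}\|\leq\Upsilon\rho^t$, and extract $(\bPhi^\star)^t$ as the $(\bx(t),\blambda(0))$ sub-block of that inverse. The bookkeeping points you flag (uniformity of the constants in $T$, the projection onto the $\blambda(0)$ columns) are exactly the ones the paper handles, and you handle them the same way.
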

\begin{proof}
  It suffices to show $\|\bx^\star(t)\|\leq \Upsilon\rho^t \|\obx\|$ for any $\obx\in\mathbb{R}^{\bn_{\bx}}$ and $t\in\mathbb{I}_{\geq 0}$ because $\bx^\star(t)=(\bPhi^\star)^t \obx$. Consider a finite-horizon LQR \eqref{eqn:lqr-finite} with $\bP=\bP^\star$ (the solution of the DARE \eqref{eqn:dare}). Since $\bP^\star$ is the Hessian of cost-to-go function, the solution of \eqref{eqn:lqr-finite} is equal to the corresponding piece of the infinite-horizon LQR solution. For given $t\in\mathbb{I}_{\geq 0}$, we choose some $T\geq t$. The KKT matrix $\bH$ of \eqref{eqn:lqr-finite} is a banded matrix induced by the temporal graph $\cG=(\cV:=\mathbb{I}_{[0,T]},\cE:=\{\{0,1\},\{1,2\},\cdots,\{T-1,T\})$ and index sets $\cI=\cJ:=\{I_t\}_{t=0}^T$ with $I_t$ holding the index of $[\bx(t);\bu(t);\blambda (t)]$ (Figure \ref{fig:space-time-graph}, top). The mapping $\obx\mapsto\bx^\star(t)$ can be obtained by $\Pi_{\bx}(\bH^{-1})_{[t][0]}\Pi^\top_{\blambda}$, where $(\bH^{-1})_{[t][0]}:=(\bH^{-1})[I_t,I_0]$, and $\Pi_{\bx}$ and $\Pi_{\blambda}$ are the projection mappings that extract $\bx$ and $\blambda$, respectively, from $(\bx,\bu,\blambda)$. {Here, $\Pi_{\blambda}$ appears, since the initial state $\obx$ enters into the constraint, whose index within $(\bx,\bu,\blambda)$ is associated with $\blambda$.} Thus, it suffices to show that $\|\bH^{-1}_{[t][0]}\|\leq \Upsilon\rho^{t}$. From Theorems \ref{thm:inv}, \ref{thm:uniform-regularity}, \ref{thm:ctrb}, and \ref{thm:dare} we obtain $\|\bH^{-1}_{[t][0]}\|\leq \Upsilon\rho^{t}$. 
\end{proof}

\section{Proofs}\label{apx:proofs}
\subsection{Proof of Theorem \ref{thm:decay}}\label{apx:decay}
Consider the space-time graph $\cG_{T}:=(\cV_T,\cE_{T})$ (Figure \ref{fig:space-time-graph}, bottom), where $\cV_T:=\mathbb{I}_{[0,T]}\times\cV$, $\cE_{T}:=\{\{(t,i),(t,j)\}\}_{\{i,j\}\in\cE,t\in\mathbb{I}_{[0,T-1]}}\cup\{\{(0,i),(1,i)\},\cdots,\{(t-1,i),(t,i)\}\}_{i\in\cV} \cup \{\{(T,i),(T,j)\}\}_{i,j\in\cV}$. Here, the pairs $(t,i)\in\mathbb{I}_{[0,T]}\times\cV$ constitute the space-time node set $\cV_T$; and the spatial couplings, temporal couplings, and the dense coupling at the $t=T$ (due to potentially dense $\bP$) constitute the edge set $\cE_T$. {The KKT matrix $\bH$ of \eqref{eqn:lqr-finite} has a bandwidth not greater than $1$ induced by $(\cG_{T},\cI_T,\cI_T)$ with $\cI_T:=\{I_{t,i}\}_{t\in\mathbb{I}_{[0,T]},i\in\cV}$ and $I_{t,i}$ holding the index of $(x_i(t),u_i(t),\lambda_i(t))$ because the sparsity structure of $\bA$, $\bB$, $\bQ$, and $\bR$ assumed in \eqref{eqn:settings} only allows the coupling between the directly neighbored nodes, and the temporal couplings only exist between adjacent time indices}. With $\bP=\bP^\star$ (the solution of the DARE \eqref{eqn:dare}), the solution of \eqref{eqn:lqr-finite} is equal to the corresponding piece of the solution of \eqref{eqn:lqr}. Thus, the solution mapping $\obx\mapsto\bu(0)$ of \eqref{eqn:lqr-finite} is equal to the optimal gain matrix $\bK^\star$, and this implies $ K^\star_{ij}= \Pi_{u_i}(\bH^{-1})_{[0,i][0,j]}  \Pi^\top_{\lambda_j}$, where $(\bH^{-1})_{[t,i][t,j]}:=\bH^{-1}[I_{t,i},I_{t,j}]$, and $\Pi_{u_i}$ and $\Pi_{\lambda_j}$ are projection operators extracting $u_i$ and $\lambda_j$ from $(x_i,u_i,\lambda_i)$ and $(x_j,u_j,\lambda_j)$, respectively. {Here, $\Pi_{\lambda_j}$ appears, since the initial state $\overline{x}_j$ enters into the constraint, whose index within $(x_j,u_j,\lambda_j)$ is associated with $\lambda_j$.} Thus, it suffices to show that $\| (\bH^{-1})_{[0,i][0,j]}\| \leq \Upsilon \rho^{d_\cG(i,j)}$. We now choose $T$ such that $T\geq \max_{i,j\in\cV}d_{\cG}(i,j) $. We observe that $d_{\cG_T}((0,i),(0,j)) = d_{\cG}(i,j)$, since the dense coupling at stage $T$ is sufficiently far away from stage $0$ {(for intuition, see Figure \ref{fig:space-time-graph}, bottom)}. By Assumption \ref{ass:main} and Theorems \ref{thm:inv}, \ref{thm:uniform-regularity}, \ref{thm:ctrb}, and \ref{thm:dare}, we have $\| (\bH^{-1})_{[0,i][0,j]}\| \leq \Upsilon \rho^{d_\cG(i,j)}$. Lastly, $\rho\in(0,1)$ and $\Upsilon\geq 1$ follow from {$\gamma_{\bH}< 1$ and $L_{\bH}> 1$, which can be verified from their definitions in \eqref{eqn:decay-constants} and Assumption \ref{ass:main}. }

\subsection{Proof of Corollary \ref{cor:decay}}\label{apx:cor}
  We observe $\sum_{j\in\cV\setminus\cN_{\cG}^\kappa[i]}\|K^\star_{ij}\|
  \leq \sum_{d=\kappa+1}^\infty \Upsilon p(d)\cdot(\rho/\delta)^d \delta^d
  \leq (\sup_{d\in\mathbb{I}_{\geq 0}} p(d) (\rho/\delta)^d)\frac{\Upsilon \delta}{1-\delta} \delta^{\kappa}$ for any $i\in\cV$. Here the first inequality follows from Assumption \ref{ass:poly}, and the second inequality follows from the fact that the product of a {subexponential} and an exponentially decaying functions is bounded above. Similarly, one can obtain the bound for $\sum_{i\in\cV\setminus\cN_{\cG}^\kappa[j]}\|K^\star_{ij}\|$. By Lemma \ref{lem:sqrt}, we obtain the desired result. Finally, $\delta\in(0,1)$ follows from $\rho\in(0,1)$ (Theorem \ref{thm:decay}).

\subsection{Proof of Theorem \ref{thm:stab}}\label{apx:stab}
For $V(\bx)  :=\sum_{t=0}^\infty \|(\bPhi^\star)^t\bx\|^2 = \bx^\top \bV\bx$, we have
  \begin{align}\label{eqn:lyapV}
    - V(\bx) +V(\bphi^\kappa(\bx))
    &= -\|\bx\|^2 - V(\bphi^\star(\bx)) + V(\bphi^\kappa(\bx))\nonumber\\
    &=  -\|\bx\|^2 + (-\bphi^\star(\bx)+ \bphi^\kappa(\bx))^\top\bV(\bphi^\star(\bx)+ \bphi^\kappa(\bx))\nonumber\\
    &= -\|\bx\|^2 + V(\bB(\bK^\star-\bK^\kappa)\bx) + 2\bphi^\star(\bx)^\top \bV (\bB(\bK^\star-\bK^\kappa)\bx) \nonumber\\
    &\leq -(1- \frac{\Upsilon^2\Psi L(L\Psi + 2L(1+L_{\bP}L^2/\gamma))}{1-\rho^2} \delta^\kappa) \|\bx\|^2,
  \end{align}
  where the equalities follow from the definitions of $V(\cdot)$ and $\bphi(\cdot)$, and the inequality follows from $\|\bx\|^2\leq V( \bx) \leq (\Upsilon^2 / (1-\rho^2))\|\bx\|^2$ (Theorem \ref{thm:stab-0}), Corollary \ref{cor:decay}, and
    \begin{align}\label{eqn:delta-1}
    \|\bK^\star\|=\|(\bR+\bB^\top\bP\bB)^{-1}(\bB^\top \bP\bA)\|\leq L_{\bP}L^2/\gamma.
  \end{align}
  By setting $\kappa\geq \okappa$, which ensures $\frac{\Upsilon^2\Psi L(L\Psi + 2L(1+L_{\bP}L^2/\gamma))}{1-\rho^2} \delta^\kappa \leq 1/2$, dividing \eqref{eqn:lyapV} by $V(\bx)$ (assuming $\bx$ is nonzero), and applying $\|\bx\|^2/V( \bx) \geq (1-\rho^2))/\Upsilon^2$, we obtain $\frac{V(\bphi^\kappa(\bx))}{V(\bx)}\leq 1-(1-\rho^2)/2\Upsilon^2$. That is, $V(\cdot)$ is a Lyapunov function. In turn, $\|\bx(t)\|^2\leq \frac{\Upsilon^2}{1-\rho^2} (1-(1-\rho^2)/2\Upsilon^2)^t \|\obx\|^2$, where $\{\bx(t)\}_{t=0}^\infty$ is the state trajectory induced by $\bpi^\kappa(\cdot)$ starting from $\obx$. From this we have $(\Omega,\beta)$-stability of $\bphi^{\kappa}$. Finally, $\beta\in(0,1)$  follows from $\Upsilon>1$ and $\rho\in(0,1)$.

\subsection{Proof of Theorem \ref{thm:regret}}\label{apx:regret}
For simplicity, we let $J^\star(\bx):=J(\bpi^\star(\cdot);\bx)$ and $J^\kappa(\bx):=J(\bpi^\kappa(\cdot);\bx)$. We claim that the following holds:
  \begin{subequations}\label{eqn:del}
    \begin{align}
      \label{eqn:del-1}\Delta_1(\bx)&:=\bpi^\kappa(\bx)^\top \bR\bpi^\kappa(\bx) - \bpi^\star(\bx)^\top \bR\bpi^\star(\bx) \leq L\Psi (2L_{\bP}L^2/\gamma + \Psi)\delta^\kappa\|\bx\|^2\\
      \label{eqn:del-2}\Delta_2(\bx)&:=J^\star(\bphi^\kappa(\bx)) - J^\star(\bphi^\star(\bx))\leq L^2L_{\bP}\Psi (2+2L_{\bP}L^2/\gamma + \Psi) \delta^\kappa\|\bx\|^2.
    \end{align}
  \end{subequations}
  We observe from $\Delta_1(\bx)= (\bpi^\kappa(\bx) + \bpi^\star(\bx))^\top \bR(\bpi^\kappa(\bx) - \bpi^\star(\bx))$, Corollary \ref{cor:decay}, and \eqref{eqn:delta-1} that \eqref{eqn:del-1} holds. Similarly, $\Delta_2(\bx)= (\bphi^\kappa(\bx) + \bphi^\star(\bx))^\top \bP(\bphi^\kappa(\bx) - \bphi^\star(\bx))$, 
  Corollary \ref{cor:decay}, and \eqref{eqn:delta-1} imply \eqref{eqn:del-2}.
  By inspecting the definition of $J^\kappa(\cdot)$ and $J^\star(\cdot)$, we obtain $J^\kappa(\bx^\kappa(t)) - J^\star(\bx^\kappa(t)) = J^\kappa(\bx^\kappa(t+1)) - J^\star(\bx^\kappa(t+1)) + \Delta_1(\bx^\kappa(t)) + \Delta_2(\bx^\kappa(t))$. Summing up this equality over $t=0,\cdots,T-1$ yields
  \begin{align}\label{eqn:main-1}
    J^\kappa(\obx) - J^\star(\obx) = J^\kappa(\bx^\kappa(T)) - J^\star(\bx^\kappa(T)) + \sum_{t=0}^{T-1} \Delta_1(\bx^\kappa(t)) + \sum_{t=0}^{T-1} \Delta_2(\bx^\kappa(t)).
  \end{align}
  From Theorems \ref{thm:stab} and \ref{thm:stab-0}, we have that $J^\kappa(\bx^\kappa(T))\rightarrow 0$ and $J^\star(\bx^\kappa(T))\rightarrow 0$ as $T\rightarrow \infty$. Moreover, from \eqref{eqn:del} and Theorem \ref{thm:stab}, we have $\sum_{t=0}^\infty \Delta_1(\bx^\kappa(t)) \leq \frac{\Omega^2L\Psi (2L_{\bP}L^2/\gamma + \Psi )\delta^\kappa}{1-\beta^2}\|\obx\|^2$ and $\sum_{t=0}^\infty \Delta_2(\bx^\kappa(t)) \leq \frac{\Omega^2L^2L_{\bP}\Psi (2+2L_{\bP}L^2/\gamma + \Psi ) \delta^\kappa}{1-\beta^2}\|\obx\|^2$. \hfill \\
  Therefore, taking $T\rightarrow \infty$ in \eqref{eqn:main-1} yields the desired result.

\subsection{Proof of Theorem \ref{thm:uniform}}\label{apx:uniform}
{We obtain $\|\bQ\|,\|\bR\|,\|\bB\|\leq L_0$ from the block-diagonality (from Assumption \ref{ass:uniform}\ref{ass:uniform-blkdiag}) and boundedness (from Assumption \ref{ass:uniform}\ref{ass:uniform-bdd}). We obtain $\|\bA\|\leq L_0 D$ from Assumptions \ref{ass:uniform}\ref{ass:uniform-bdd}, \ref{ass:degree}, and Lemma \ref{lem:sqrt}. We obtain $\bR\succeq \gamma_0 \bI$ from the block-diagonality (from Assumption \ref{ass:uniform}\ref{ass:uniform-blkdiag}) and Assumption \ref{ass:uniform}\ref{ass:uniform-cvx}.} Now we prove $(L,\alpha)$-stabilizability of $(\bA,\bB)$. Let $\obK_{\cV_k,\cV_k}$ be $(L,\alpha)$-stabilizing feedback for $(\bA_{\cV_k,\cV_k},\bB_{\cV_k,\cV_k})$ (from Assumption \ref{ass:uniform}\ref{ass:uniform-stab}). This and $\obK_{\cV_k,\cV_{k'}}$ from Assumption \ref{ass:uniform}\ref{ass:uniform-stab} allow us to define $\obK$ for the full system. Then, we have $\obPhi_{\cV_k,\cV_k}$ is $(L,\alpha)$-stable, and $\obPhi_{\cV_k,\cV_{k'}}=\bzero$ for $k\neq k'$, where $\obPhi:=\bA-\bB\obK$. One can {observe} that $\obPhi$ is also $(L,\alpha)$-stable, since $\obPhi$ is  block-diagonal, and each block is $(L,\alpha)$-stable. From Assumption \ref{ass:degree} and Lemma \ref{lem:sqrt}, we have $\|\obK\|\leq D L_0$. Therefore, we have constructed $(L,\alpha)$-stabilizing $\obK$; thus, $(\bA,\bB)$ is $(L,\alpha)$-stabilizable.  Theorem \ref{thm:uniform}\ref{thm:uniform-dect} follows from the duality between stabilizability and detectability.

\bibliographystyle{siamplain}
\bibliography{main}

\vspace{0.1cm}
\begin{flushright}
	\scriptsize \framebox{\parbox{2.5in}{Government License: The
			submitted manuscript has been created by UChicago Argonne,
			LLC, Operator of Argonne National Laboratory (``Argonne").
			Argonne, a U.S. Department of Energy Office of Science
			laboratory, is operated under Contract
			No. DE-AC02-06CH11357.  The U.S. Government retains for
			itself, and others acting on its behalf, a paid-up
			nonexclusive, irrevocable worldwide license in said
			article to reproduce, prepare derivative works, distribute
			copies to the public, and perform publicly and display
			publicly, by or on behalf of the Government. The Department of Energy will provide public access to these results of federally sponsored research in accordance with the DOE Public Access Plan. http://energy.gov/downloads/doe-public-access-plan. }}
	\normalsize
\end{flushright}

\end{document}